\theoremstyle{plain}
\newtheorem{thrm}{Theorem}[section]
\newtheorem{lemma}[thrm]{Lemma}
\newtheorem{prop}[thrm]{Proposition}
\newtheorem{rmrk}[thrm]{Remark}
\newtheorem{dfn}[thrm]{Definition}
\begin{document}

\newcommand{\SL}{\mathcal L^{1,p}( D)}
\newcommand{\Lp}{L^p( Dega)}
\newcommand{\CO}{C^\infty_0( \Omega)}
\newcommand{\Rn}{\mathbb R^n}
\newcommand{\Rm}{\mathbb R^m}
\newcommand{\R}{\mathbb R}
\newcommand{\Om}{\Omega}
\newcommand{\Hn}{\mathbb H^n}
\newcommand{\aB}{\alpha B}
\newcommand{\eps}{\ve}
\newcommand{\BVX}{BV_X(\Omega)}
\newcommand{\p}{\partial}
\newcommand{\IO}{\int_\Omega}
\newcommand{\bG}{\boldsymbol{G}}
\newcommand{\bg}{\mathfrak g}
\newcommand{\bz}{\mathfrak z}
\newcommand{\bv}{\mathfrak v}
\newcommand{\Bux}{\mbox{Box}}
\newcommand{\e}{\ve}
\newcommand{\X}{\mathcal X}
\newcommand{\Y}{\mathcal Y}
\newcommand{\W}{\mathcal W}
\newcommand{\la}{\lambda}
\newcommand{\vf}{\varphi}
\newcommand{\rhh}{|\nabla_H \rho|}
\newcommand{\Za}{Z_\beta}
\newcommand{\ra}{\rho_\beta}
\newcommand{\na}{\nabla_\beta}
\newcommand{\vt}{\vartheta}
\newcommand{\La}{\mathcal{L}}
\newcommand{\f}{\ell}

\newcommand{\di}{\operatorname{div}}

\numberwithin{equation}{section}

\newcommand{\RN} {\mathbb{R}^N}
\newcommand{\Sob}{S^{1,p}(\Omega)}
\newcommand{\Dxk}{\frac{\partial}{\partial x_k}}
\newcommand{\Co}{C^\infty_0(\Omega)}
\newcommand{\Je}{J_\ve}
\newcommand{\beq}{\begin{equation}}
\newcommand{\bea}[1]{\begin{array}{#1} }
\newcommand{\eeq}{ \end{equation}}
\newcommand{\ea}{ \end{array}}
\newcommand{\eh}{\ve h}
\newcommand{\D}{\operatorname{div}}
\newcommand{\Dxi}{\frac{\partial}{\partial x_{i}}}
\newcommand{\Dyi}{\frac{\partial}{\partial y_{i}}}
\newcommand{\Dt}{\frac{\partial}{\partial t}}
\newcommand{\aBa}{(\alpha+1)B}
\newcommand{\GF}{^{1+\frac{1}{2\alpha}}}
\newcommand{\GS}{^{\frac12}}
\newcommand{\HFF}{\frac{}{\rho}}
\newcommand{\HSS}{\frac{}{\rho}}
\newcommand{\HFS}{\rho^{\frac12-\frac{1}{2\alpha}}}
\newcommand{\HSF}{\frac{^{\frac32+\frac{1}{2\alpha}}}{\rho}}
\newcommand{\AF}{\rho}
\newcommand{\AR}{\rho{}^{\frac{1}{2}+\frac{1}{2\alpha}}}
\newcommand{\PF}{\alpha\frac{}{|x|}}
\newcommand{\PS}{\alpha\frac{}{\rho}}
\newcommand{\ds}{\displaystyle}
\newcommand{\Zt}{{\mathcal Z}^{t}}
\newcommand{\er}{e^{-\ell r^{-\ve}}}
\newcommand{\era}{e^{r^{-2\ve}}}
\newcommand{\XPSI}{2\alpha \begin{pmatrix} \frac{x}{|x|^2}\\ 0 \end{pmatrix} - 2\alpha\frac{{}^2}{\rho^2}\begin{pmatrix} x \\ (\alpha +1)|x|^{-\alpha}y \end{pmatrix}}
\newcommand{\Z}{ \begin{pmatrix} x \\ (\alpha + 1)|x|^{-\alpha}y \end{pmatrix} }
\newcommand{\ZZ}{ \begin{pmatrix} xx^{t} & (\alpha + 1)|x|^{-\alpha}x y^{t}\\
     (\alpha + 1)|x|^{-\alpha}x^{t} y &   (\alpha + 1)^2  |x|^{-2\alpha}yy^{t}\end{pmatrix}}
\newcommand{\norm}[1]{\lVert#1 \rVert}
\newcommand{\ve}{\varepsilon}
\newcommand{\ine}{\int_{ \mathcal A}}
\newcommand{\ineb}{\int_{\partial  \mathcal A}}
\newcommand{\lt}{(2\ell^2 \ve  -\ell \ve(n-2-\ve))}
\newcommand{\lto}{(2\ell_0^2 \ve  -\ell_0 \ve(n-2-\ve))}
\newcommand{\sa}{\langle}
\newcommand{\da}{\rangle}

\title[]{Absence of  $L^p$ spectrum for asymptotically flat diffusions in region with cavities}

\begin{abstract}
We study solutions to variable-coefficient elliptic equations of the form $-\D(A(x) \nabla u) = \kappa u$, $\kappa>0$, in an exterior domain $\Om\subset \Rn$, where $A(x)$ is uniformly elliptic and asymptotically flat. Extending Rellich's classical $L^2$ result for the Laplacian, we show that if $u\in L^p(\Om)$ for some $0<p<\frac{2n}{n-1}$, then $u\equiv 0$. The proof uses new monotonicity formulas based on weighted energies and vector fields adapted to the geometry of $A(x)$. Our results highlight a sharper integrability threshold in the variable-coefficient setting.
\end{abstract}
\author{Agnid Banerjee}
\address{School of Mathematical and Statistical Sciences\\ Arizona State University}\email[Agnid Banerjee]{Agnid.Banerjee@asu.edu}

\author{Nicola Garofalo}
\address{School of Mathematical and Statistical Sciences\\ Arizona State University}\email[Nicola Garofalo]{Nicola.Garofalo@asu.edu}

%
%
%
\keywords{Monotonicity formulas. Eigenfunctions of second order elliptic partial differential equations. Liouville and Rellich type theorems}

\subjclass[2020]{35P15, 35Q40, 47A75}

\maketitle

\tableofcontents

\section{Introduction}\label{e:intro}

Understanding the spectral properties of elliptic equations in unbounded domains has long been a central concern in analysis and mathematical physics. A foundational result in this direction was established by F. Rellich in his seminal work \cite{Rel}, where he proved that any nontrivial solution $u$ of the Helmholtz equation 
\begin{equation}\label{helmo}
- \Delta u =  \kappa u
\end{equation}
in the exterior of a cavity $\Om = \{x\in \Rn\mid |x|>1\}$, must fail to belong to $L^2(\Om)$. This result plays a critical role in the spectral theory of the time-independent Schr\"odinger operator $H = - \Delta + V(x)$, where it helps to exclude the existence of positive eigenvalues - an essential ingredient in the formulation of scattering theory and the spectral characterization of quantum systems. One should see in this direction the foundational works by Kato \cite{Ka}, Agmon \cite{A} and Simon \cite{RS}. 

Rellich's theorem not only establishes nonexistence in an integral sense, but also provides sharp pointwise decay conditions under which vanishing must occur. Specifically, if a solution $u$ of \eqref{helmo} satisfies
 \begin{equation}\label{repoint}
u(x) =  o(|x|^{-\frac{n-1}2})
\end{equation}
as $|x|\to \infty$, then $u\equiv 0$ in $\Om$. This threshold is sharp in the pointwise category: for every $\kappa >0$ the spherically symmetric solution 
\begin{equation}\label{f}
u(x) = \int_{\mathbb S^{n-1}} e^{i \sqrt \kappa \sa x,\omega\da} d\sigma(\omega) = c(n,\kappa) |x|^{- \frac{n-2}2} J_{\frac{n-2}{2}}(\sqrt \kappa |x|),
\end{equation}
satisfies the asymptotic bound
\begin{equation}\label{O}
u(x) = O(|x|^{-\frac{n-1}2}),
\end{equation}
but is not square-integrable on $\Om$, in accordance with Rellich's result. Furthermore, thanks to \eqref{O} the eigenfunctions \eqref{f} belong to $L^p(\Om)$ if and only if $p>\frac{2n}{n-1}$, a fact that sharply characterizes the critical integrability threshold for nontriviality.

In our recent work \cite{BGmat}  
we refined Rellich's theorem by establishing that even membership in the endpoint space $L^p(\Om)$ for 
$0< p \le \frac{2n}{n-1}$ suffices to guarantee that $u\equiv 0$. This sharp $L^p$ version of the classical result naturally leads to the question of whether analogous phenomena hold in the more general and physically relevant setting of variable-coefficient operators.

The present paper addresses this question for second-order divergence-form operators with uniformly elliptic and asymptotically flat matrix of the coefficients. Specifically, we consider nontrivial solutions $u$ to the equation 
\begin{equation}\label{mn}
-\D(A(x) \nabla u) = u, 
\end{equation}
in the exterior domain $\Om = \{x\in \Rn \mid |x|>1\}$, where $A(x)$ is a symmetric, uniformly elliptic matrix satisfying appropriate decay assumptions at infinity. Under these structural conditions, our main result (Theorem \ref{main1}) demonstrates that the threshold integrability condition for nontrivial solutions is strictly stronger than in the constant-coefficient case: if $u\in L^{p}(\Om)$ for some  $0<p <\frac{2n}{n-1}$, then $u \equiv 0$ in $\Om$. Notably, the endpoint case $p =\frac{2n}{n-1}$ remains unresolved in the variable-coefficient setting, leaving open the intriguing question of whether this is an artifact of the method or a genuine spectral feature of such operators.

The proof of Theorem \ref{main1} is ultimately reduced to a quantitative lower bound (Theorem \ref{main}) on the spatial growth of $u$, in the spirit of three-sphere inequalities and monotonicity formulae. Our approach is inspired by the techniques introduced in \cite{GSh} and extended in \cite{BGjam} to subelliptic settings, but it also introduces several novel ingredients to handle the intricacies of variable coefficients. Most notably, to capture the asymptotic behavior of eigenfunctions and derive monotonicity, we introduce weighted energies involving the spherically symmetric weight function
\begin{equation}\label{h}
h(|x|) = |x|^\ell e^{\f |x|^{-\ve}},\ \ \ \ \ \ \ell>0,\ \ \ve>0,
\end{equation}
and a suitably chosen vector field
\begin{equation}\label{X}
X(x)= |x|^{s} e^{|x|^{-2\ve}} Z(x),
\end{equation}
carefully adapted to the geometry induced by
$A(x)$, which allow us to derive two crucial monotonicity formulas (Theorems \ref{go1} and \ref{mong2}). These monotonicity results form the analytic core of the paper and may be viewed as a counterpart at infinity to the local almost monotonicity formulae developed in \cite{GL, GLcpam}.

In contrast to the constant coefficient case, the exponential factor in the weight  \eqref{h} plays a pivotal role in the proof of the monotonicity result in Theorem~\ref{go1}. This factor is specifically designed to absorb certain zero-order error terms that emerge in the analysis of the variable coefficient setting and which present a significant analytic obstruction absent in the constant coefficient case. 

Concerning the vector field $X$ in \eqref{X}, it contains two critical features: the vector field $Z$ and the exponential factor, each playing a different role. To illustrate them,  consider the conformal coefficient 
\[
\mu(x) = \sa A(x)\nabla r(x),\nabla r(x)\da,
\]
where we have let $r(x) = |x|$.
We then define  
\[
Z(x) = r(x) \frac{A(x) \nabla r(x)}{\mu(x)}.
\]
This vector field has several remarkable properties, which we collect in Section \ref{np}, particularly in Lemma \ref{dc1}. One should think of $Z$ as a ad hoc variable-coefficient version of the infinitesimal generator of dilations, the Euler vector field $Z(x) = x$, to which it reduces when $A(x) \equiv I_n$. 
On the other hand, the exponential factor in \eqref{X} is carefully tailored to absorb certain first-order error terms that arise in the applications of the Rellich-Payne-Weinberger identity (see Proposition \ref{P:PW}). While this refinement is not needed in the constant coefficient case, it is essential to the success of our arguments in the variable coefficient setting. Ultimately, the proof of the monotonicity Theorems~\ref{go1} and \ref{mong2} hinges on a combination of analytic and geometric ideas which represent one of the main novelties of the present work.

The specific hypothesis on the matrix $A(x) = [a_{ij}(x)]$ are as follows. Besides symmetry, we assume the following uniform ellipticity assumption for some $\lambda\in (0,1)$
\begin{equation}\label{ell}
\lambda |\xi|^2 \leq \langle A(x) \xi, \xi \rangle  \leq \lambda^{-1} |\xi|^2,
\end{equation}
for every $x\in \Om$ and every $\xi\in \Rn$.
Furthermore, we require that there exist $\alpha, C>0$ such that, with
\begin{equation}\label{B}
B(x) =[b_{ij}(x)] = A(x) - I_n,\ \ \ \ \ x\in \Om,
\end{equation}
we have the following decay at infinity 
\begin{equation}\label{st}
|x| |\nabla B(x)| + |B(x)| \leq \frac{C}{|x|^{\alpha}},\ \ \ \ \ \ \ \ x\in \Om.
\end{equation}
The hypothesis \eqref{ell} and \eqref{st} will be assumed throughout the rest of the paper without further reference to them. The main result in this paper is the following.

\begin{thrm}\label{main1}
Let $u$ be a solution of \eqref{mn} in $\Om$. If $u\in L^{p}(\Om)$ for some  $0<p <\frac{2n}{n-1}$, then $u \equiv 0$ in $\Om$.
\end{thrm}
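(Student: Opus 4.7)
The plan is to argue by contradiction: assume $u\not\equiv 0$ in $\Omega$ and derive incompatible bounds on the annular $L^2$--energy
\[
E(R) \;=\; \int_{B_{2R}\setminus B_R} u^2\,dx.
\]
From the monotonicity formulas of Theorems \ref{go1} and \ref{mong2}, distilled into the quantitative lower bound of Theorem \ref{main}, I would extract $E(R)\ge c\,R$ for all sufficiently large $R$, with $c=c(u)>0$. The exponent $1$ on $R$ is forced by the sharp pointwise Rellich threshold \eqref{repoint}--\eqref{O}: the explicit eigenfunctions \eqref{f} saturate $|u(x)|\sim |x|^{-(n-1)/2}$, and integrating $u^2\sim |x|^{-(n-1)}$ over $B_{2R}\setminus B_R$ gives precisely $\sim R$.

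For the competing upper bound I would combine $u\in L^p(\Omega)$ with a local boundedness estimate. Covering $A_R:=B_{2R}\setminus B_R$ by $N_R\lesssim R^n$ unit balls $B_j$ and using that $|u|$ is a nonnegative subsolution of $-\D(A\nabla v)\le v$ (by Kato's inequality, with coefficients bounded and uniformly elliptic on all of $\Omega$ via \eqref{ell} and \eqref{st}), Moser's $L^\infty$--$L^p$ estimate on each unit ball yields
\[
\sup_{\frac12 B_j}|u| \;\le\; C\,\|u\|_{L^p(B_j)},
\]
with $C$ independent of $R$ and $j$. Squaring, integrating, and summing gives $\int_{A_R}u^2\le C\sum_j\|u\|_{L^p(B_j)}^2$. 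For $p\ge 2$, discrete H\"older bounds this sum by $C\,R^{n(1-2/p)}\|u\|_{L^p(A_R^*)}^2$; for $0<p<2$, the subadditivity $\sum_j a_j^{2/p}\le(\sum_j a_j)^{2/p}$ (valid since $2/p\ge 1$) gives the even smaller $\|u\|_{L^p(A_R^*)}^2$. Here $A_R^*$ denotes a mild enlargement of $A_R$.

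Since $\int_\Omega |u|^p<\infty$, I can extract a dyadic sequence $R_k=2^k\to\infty$ along which $\|u\|_{L^p(A_{R_k}^*)}\to 0$. For $p\ge 2$ this forces
\[
c\,R_k \;\le\; E(R_k) \;\le\; C\,R_k^{n(1-2/p)}\,o(1),
\]
which is impossible as soon as $n(1-2/p)<1$, i.e.\ $p<\frac{2n}{n-1}$. For $0<p<2$ the upper bound is already $o(1)$, contradicting $E(R_k)\ge c\,R_k$ without any further restriction on $p$. Together these cover the full range $0<p<\frac{2n}{n-1}$ claimed in Theorem \ref{main1}.

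The main obstacle lies not in this reduction but in establishing the underlying lower bound: proving Theorems \ref{go1} and \ref{mong2} requires balancing the vector field $X$ in \eqref{X} and the weight $h$ in \eqref{h} so that the zero- and first-order error terms generated by $B(x)=A(x)-I_n$ and its derivatives (controlled via \eqref{st}) are absorbed into the coercive terms of the Rellich--Payne--Weinberger identity of Proposition \ref{P:PW}; the role of the exponential cut-offs $e^{\ell|x|^{-\varepsilon}}$ and $e^{|x|^{-2\varepsilon}}$ in \eqref{h}--\eqref{X} is precisely to enable this absorption. Within the contradiction argument, the only technical delicacy is to ensure that the Moser constant above is uniform in $R$, which follows from the global uniform ellipticity \eqref{ell} and from the boundedness of the coefficients guaranteed by \eqref{st}.
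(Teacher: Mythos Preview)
Your overall strategy---contradiction via comparing a lower bound on $E(R)=\int_{B_{2R}\setminus B_R}u^2$ coming from Theorem \ref{main} against an upper bound coming from $u\in L^p$---is exactly the paper's approach. But there is a real error in the lower bound you invoke. Theorem \ref{main} does \emph{not} yield $E(R)\ge cR$; it only gives $E(R)\ge C_1 R^{1-\delta}$ for each fixed $\delta\in(0,1)$, with constants depending on $\delta$. Your heuristic justification (the constant-coefficient eigenfunctions \eqref{f} saturate $|u|\sim|x|^{-(n-1)/2}$, hence $E(R)\sim R$) is precisely what breaks down in the variable-coefficient setting: the error terms from $B(x)=A(x)-I_n$ force the $\delta$-loss in Theorems \ref{mong2} and \ref{main}, and this loss is exactly why the paper explicitly leaves the endpoint $p=\frac{2n}{n-1}$ open. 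Indeed, if your claimed bound $E(R)\ge cR$ were available, your own argument would cover that endpoint as well---which it cannot.

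The repair is straightforward and reproduces the paper's proof: for $2\le p<\frac{2n}{n-1}$ set $\delta=1-n(1-\tfrac{2}{p})\in(0,1)$ so that the two powers of $R$ match, and for $0<p<2$ any $\delta\in(0,1)$ (say $\delta=\tfrac12$) already suffices since your upper bound is $o(1)$. With this adjustment the contradiction goes through for the full range $0<p<\frac{2n}{n-1}$. Incidentally, your upper-bound machinery is heavier than needed: for $p\ge 2$ a single H\"older inequality on the annulus already gives $E(R)\le C\,R^{n(1-2/p)}\|u\|_{L^p(A_R)}^2$, and for $0<p<2$ one global Moser estimate $\|u\|_{L^\infty(\Rn\setminus B_2)}<\infty$ together with $u^2\le \|u\|_\infty^{2-p}|u|^p$ suffices; the unit-ball covering and the discrete inequalities are unnecessary.
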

As noted above, in contrast to the constant coefficient case, Theorem \ref{main1} does not include the endpoint $p = \frac{2n}{n-1}$. Theorem \ref{main1} will be obtained as a consequence of the following result.

\begin{thrm}\label{main}
Let $u\in W^{1,2}_{loc}(\Om)$ be a solution to \eqref{mn} such that $u\not\equiv 0$. For every $\delta \in (0,1)$, there exist  $\bar R>>1$ (depending on $u, n, \la,\alpha,C, \delta$), and $C_1>0$ (depending on $n,\la,\alpha,C,\delta$),
 such that for all $R\geq \bar R$
\begin{equation}\label{goal}
\int_{ B_{2R} \setminus B_R} u^2 dx \geq C_1 R^{1-\delta}.
\end{equation}
\end{thrm}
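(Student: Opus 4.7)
My plan is to argue by contradiction: assume that \eqref{goal} fails for some $\delta \in (0,1)$, so that there is a sequence $R_k \to \infty$ along which $R_k^{\delta-1} \int_{B_{2R_k}\setminus B_{R_k}} u^2\, dx \to 0$. The two monotonicity theorems \ref{go1} and \ref{mong2}, built on the weight $h$ from \eqref{h} and the vector field $X$ from \eqref{X}, will force a polynomial lower bound on the spherical $L^2$ energy of $u$ that contradicts this.

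First I would localize positivity. Since $u\not\equiv 0$, weak unique continuation for the uniformly elliptic operator $\operatorname{div}(A\nabla\,\cdot)$ with Lipschitz coefficients produces some $r_0 \geq 2$ on which $u|_{\partial B_{r_0}}$ is not identically zero. A weighted boundary quantity of the shape
\[
H(r) = \int_{\partial B_r} u^2\, h(r)\, \mu(x)^{1/2}\, d\sigma,
\]
with $\mu(x) = \langle A(x)\nabla r(x),\nabla r(x)\rangle$ the conformal coefficient of $Z$ and $h$ as in \eqref{h}, is therefore strictly positive at $r=r_0$. I would then invoke Theorem \ref{go1} to conclude that $H(r)$, or more likely a monotone combination of $H(r)$ with the weighted Dirichlet energy $\int_{B_r}\langle A\nabla u,\nabla u\rangle$, is non-decreasing for $r \geq \bar R$. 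Here the exponential factor $e^{\ell r^{-\varepsilon}}$ in $h$ is engineered to absorb the zero-order errors produced by $B(x) = A(x) - I_n$, whose decay is governed by \eqref{st}, while the factor $e^{r^{-2\varepsilon}}$ in $X$ absorbs the first-order errors arising when the Rellich-Payne-Weinberger identity of Proposition \ref{P:PW} is applied with $X$ to $u$ on an annulus $B_r\setminus B_{r_0}$; Theorem \ref{mong2} would be what closes this second step.

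Combining these inputs yields $H(r) \geq H(r_0) > 0$ for $r\geq \max\{r_0,\bar R\}$; unravelling the weight gives
\[
\int_{\partial B_r} u^2\, d\sigma \geq c_0\, r^{-\ell},\qquad r \geq \bar R,
\]
for some $c_0 > 0$ depending on $r_0$ and $u$. Integrating in $r$ via the coarea formula,
\[
\int_{B_{2R}\setminus B_R} u^2\, dx = \int_R^{2R} \int_{\partial B_r} u^2\, d\sigma\, dr \geq c_0 \int_R^{2R} r^{-\ell}\, dr \geq c_1 R^{1-\ell},
\]
and I would set $\ell = \delta$ at the outset (which in turn fixes a compatible choice of $\varepsilon$ and $s$) to obtain \eqref{goal}.

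The main technical obstacle is the simultaneous calibration of the parameters $\ell,\varepsilon,s,\bar R$. They must be selected so that the exponential corrections in $h$ and $X$ genuinely dominate the $|x|^{-\alpha}$-sized error quantities from \eqref{st} on the regime $|x|\geq \bar R$, while leaving the leading-order quadratic form in the Rellich-type identity of Proposition \ref{P:PW} non-negative. This balance is precisely where the endpoint $\ell=0$, corresponding to the constant-coefficient threshold $\delta = 0$, becomes inaccessible by the present method and matches the strict inequality $p < \tfrac{2n}{n-1}$ in Theorem \ref{main1}.
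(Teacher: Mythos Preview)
Your outline has a structural gap that makes it unworkable as written. The parameter $\ell$ in Theorem~\ref{go1} is required to satisfy $\ell\ge\tilde\ell(n,\alpha,\ve)\gg 1$; you cannot ``set $\ell=\delta$ at the outset'' with $\delta\in(0,1)$. In the paper, $\ell$ is chosen \emph{large} (and depends on $u$, see Lemma~\ref{post}) precisely so that, for a fixed radius $R_0$ where $\int_{\partial B_{R_0}}u^2>0$, the functional $F(\ell_0,R_0)$ is forced positive; monotonicity of $\rho\mapsto F(\ell_0,\rho)/\rho^{n-1}$ then propagates this positivity to all $\rho\ge R_0$. But since $v=r^{\ell_0}e^{\ell_0 r^{-\ve}}u$ with $\ell_0$ large, ``unravelling the weight'' from $F$ only yields something of order $r^{-2\ell_0}$, which is far too weak to produce \eqref{goal}. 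Theorem~\ref{go1} by itself never gives a polynomial lower bound with the correct exponent.

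The missing ingredient is the second functional $G(\rho)$ of Definition~\ref{D:G}, built from $w=r^{(n-1)/2}u$ with \emph{no} free parameter $\ell$, and its monotonicity in Theorem~\ref{mong2}: $\rho\mapsto G(\rho)/\rho^{n-1-\delta}$ is nondecreasing. This is where $\delta$ actually enters. The paper's argument runs: assume \eqref{goal} fails along a sequence, i.e.\ hypothesis~\eqref{hyp}; then (Lemma~\ref{post1}) combine the positivity of $F(\ell_0,\cdot)$ with \eqref{hyp} to produce a sequence $r_j\to\infty$ on which $G(r_j)>0$ --- this transfer from $F$ to $G$ is the delicate step, involving the algebraic identity relating $v$, $w$, and the sign of $\int_{\partial B_{r_j}}Z(w^2)\mu$. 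Theorem~\ref{mong2} then upgrades this to $G(\rho)\ge C\rho^{n-1-\delta}$ for all large $\rho$ (Lemma~\ref{post2}), which unpacks to $\int_{\partial B_\rho}\bigl[(Zu/\rho)^2\mu+u^2\bigr]\ge C\rho^{-\delta}$; integrating in $\rho$ and applying Caccioppoli yields \eqref{goal}, a contradiction. Your proposal collapses this two-stage mechanism into one and assigns the roles of $\ell$ and $\delta$ to the wrong places.
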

We emphasize that the lower bound on annular integrals \eqref{goal} reflects non-integrability at infinity. The structure of the paper is as follows. In Section \ref{S:key} we introduce the basic functionals $F(\ell,\rho)$ and $G(\rho)$ (see Definitions \ref{D:F} and \ref{D:G}), and state, without proof, their key monotonicity properties. For this, see Theorems \ref{go1} and \ref{mong2}, respectively. The final objective of the section is to derive Theorems \ref{main} and \ref{main1} from these results. Using Theorem \ref{go1}, we establish the important Lemma \ref{post}. We also prove a crucial conditional positivity result, see Lemma \ref{post1}. Section \ref{pf} is devoted to the proof of Theorems \ref{go1} and \ref{mong2}. As we have said, this section forms the analytic core of the paper, and unsurprisingly it represents the most technical part of the latter. The final Section \ref{np} is an appendix in which we introduce the relevant notation and collect some basic auxiliary results that are extensively used in the main part of the paper.

\vskip 0.2in

\section{Two key motononicity results}\label{S:key}

In this section we state without proofs the two main technical results of the present work, Theorems \ref{go1} and \ref{mong2}, and show how to derive from them Theorems \ref{main} and \ref{main1}. As we have mentioned, the actual proofs of these results are deferred to Section \ref{pf}.
Let  $u$ be a given solution to \eqref{mn}. With $h(r)$ as in \eqref{h}, we consider the function
\begin{equation}\label{deZv}
v= h(r) u = r(x)^\ell e^{\f r(x)^{-\ve}} u.
\end{equation}
The reader should bear in mind that such function depends on the parameters $\ell$ and $\ve$, and therefore we should actually indicate it with $v_{\ell,\ve}$, but this would overburden the notation. When we will make a specific choice of $\ell$ and $\ve$, the function $v$ will be adjusted to reflect such choices.

Consider now the vector field $Z$ in \eqref{F}. We preliminarily observe that by applying Lemma \ref{L:rad}, we obtain
\begin{equation}\label{hg}
\left(\frac{Z h}{r} \right)^2 \mu  = \langle A\nabla  h, \nabla h \rangle,
\end{equation}
and
\begin{equation}\label{hg1}
\bigg(\left(\frac{Zv}{r}\right)^2 \mu -\langle A \nabla v, \nabla v\rangle \bigg)=  r^{2\ell} e^{2\ell r^{-\ve}}\bigg(\left(\frac{Zu}{r}\right)^2 \mu -\langle A \nabla u, \nabla u\rangle \bigg),
\end{equation}
where $\mu$ is the conformal factor in \eqref{mu}.
The functional $F(\ell, \rho)$ in the following definition will play a critical role in the results in this paper. 
 
\begin{dfn}\label{D:F}
For every $\rho>1$, $\ell>0$, we let  
\begin{align}\label{g1}
&F(\ell, \rho) = 2 \int_{\partial B_\rho} \era \left(\frac{Zv}{r} \right)^2 \mu  - \int_{\partial B_\rho} \era \langle A\nabla v, \nabla v \rangle +\int_{\partial B_\rho} \era v^2 \\
& + \f(\f -n+2) \int_{\partial B_\rho} r^{-2} \era v^2 \mu  - \lt \int_{\partial B_\rho} r^{-2-\ve} \era  v^2    \mu
\notag\\
& + \f^2 \ve^2 \int_{\partial B_\rho} r^{-2-2\ve} \era  v^2\mu   - \int_{\partial B_\rho} r^{-1} \era v^2 \mu .
\notag
\end{align}
\end{dfn}

\begin{rmrk}\label{R:v}
Since, as we have noted above, the function $v$ in \eqref{g1} depends on the solution $u$ to \eqref{mn}, as well as on the parameters $\ell, \ve$, a more accurate notation for the functional in Definition \ref{D:F} should have been $F(u,\ell,\ve,\rho)$. The reader should keep this in mind when, in the results that follow, we restrict the range of the parameters $\ell, \ve, \rho$. In particular, when in Lemma \ref{post} we write $F(\ell_0,\rho)$, it means that the function $v$ involved is the one corresponding to the choice $\ell = \ell_0$ in \eqref{deZv}, i.e.,
\[
v=  r(x)^{\ell_0} e^{\ell_0 r(x)^{-\ve}} u.
\]
\end{rmrk}

The first main result of this section is the following theorem. Its proof is deferred to Section \ref{pf}.

\begin{thrm}\label{go1}
Let $0<\ve <\frac{\alpha}{2}$. There exist $r_0(n,\alpha,\ve), \tilde \ell(n,\alpha,\ve)>>1$, such that for all $\ell \geq \tilde \ell$ and $\rho \geq  r_0$, the function
$$\rho\ \longrightarrow\  \frac{F(\ell, \rho)}{\rho^{n-1}}$$ is  non-decreasing.
\end{thrm}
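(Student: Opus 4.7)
The plan is to show $\rho\,\partial_\rho F(\ell,\rho) \geq (n-1)F(\ell,\rho)$ for $\rho\geq r_0$ and $\ell\geq\tilde\ell$, which is equivalent to the stated monotonicity. My first step is to derive the conjugated PDE satisfied by $v=hu$. Starting from \eqref{mn} and writing $u=v/h$, a direct computation yields a divergence-form equation for $v$ with drift $2h^{-1}A\nabla h$ and zero-order source $v\bigl(1 + 2|\nabla h|_A^2/h^2 - h^{-1}\operatorname{div}(A\nabla h)\bigr)$. Expanding this coefficient for $h(r)=r^\ell e^{\ell r^{-\ve}}$, using the radial identity \eqref{hg} together with $h'/h=\ell\bigl(r^{-1}-\ve r^{-\ve-1}\bigr)$, produces precisely the weight structure appearing in Definition \ref{D:F}: the term $\ell(\ell-n+2)r^{-2}\mu$ from the pure-power part, the cross term $-\lt\, r^{-2-\ve}\mu$ from the mixed interaction, and the quadratic exponential term $\ell^2\ve^2 r^{-2-2\ve}\mu$ from squaring the exponential's derivative.

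Next I will apply the Rellich--Payne--Weinberger identity (Proposition \ref{P:PW}) on an annular shell $B_{\rho+\delta}\setminus B_\rho$ with the vector field $X=r^s e^{r^{-2\ve}}Z$, and pass to $\delta\to 0^+$ to obtain a boundary identity on $\partial B_\rho$. The properties of $Z$ recorded in Lemma \ref{dc1}, in particular that $Z/r$ coincides with $A\nabla r/\mu$ on level sets of $r$ so that $Z$ serves as an $A$-adjusted outer conormal on spheres, allow me to express the boundary contribution in exactly the combination $2(Zv/r)^2\mu-\langle A\nabla v,\nabla v\rangle$ that opens $F(\ell,\rho)$. The remaining interior integrand involves $\operatorname{div}(X)$, whose derivative of the exponential factor produces a strictly positive first-order term of order $\ve r^{s-1-2\ve}\mu$. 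This is the mechanism by which the factor $e^{r^{-2\ve}}$ in $X$ manufactures a positive first-order quantity available to absorb the gradient error terms, a feature unnecessary in the constant coefficient case.

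Combining the coarea differentiation $\partial_\rho\int_{\partial B_\rho}\Phi\,dS = \int_{\partial B_\rho}\bigl(\partial_r\Phi + (n-1)\Phi/r\bigr)\,dS$ applied to each integrand of $F$, with the PW boundary identity and the conjugated PDE, one obtains an expression of the form $\rho\,\partial_\rho F-(n-1)F = \mathrm{Main} - \mathrm{Error}$, where $\mathrm{Main}$ collects squares and positive weighted terms inherited from the exponential factors in $h$ and $X$, while $\mathrm{Error}$ is bounded pointwise on $\partial B_\rho$ by $Cr^{-\alpha}$ times a combination of $v^2$ and $|\nabla v|^2$ densities via \eqref{st}. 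The lower-order errors $Cr^{-\alpha}v^2$ are absorbed by the positive boundary term $\ell^2\ve^2 r^{-2-2\ve}\era v^2\mu$, which is possible exactly because $2\ve<\alpha$ forces the ratio $r^{-(\alpha-2\ve)}$ to vanish at infinity and large $\ell$ amplifies the coefficient.

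The main obstacle will be the absorption of the first-order error terms $Cr^{-\alpha}|\nabla v|^2$, since these do not appear naturally as boundary integrands in $F(\ell,\rho)$ and must be estimated through a Cauchy--Schwarz decomposition $|\nabla v|^2 \lesssim (Zv/r)^2\mu + |\nabla_T v|^2$, with $\nabla_T$ the tangential gradient on $\partial B_\rho$, together with the observation that the tangential piece is controlled by $\langle A\nabla v,\nabla v\rangle$ up to lower-order remainders. The delicate matching of these decay rates, encoded in the condition $\ve<\alpha/2$, is the analytic core of the argument: the positive contribution generated by differentiating the exponential factor must strictly dominate the $r^{-\alpha}$-rate decay of the coefficient error in \eqref{st}, which is precisely what the strict inequality $2\ve<\alpha$ ensures, once $r_0$ is taken sufficiently large and $\tilde\ell$ is taken large enough to absorb the finite absolute constants appearing in the $\mathrm{Error}$ bound.
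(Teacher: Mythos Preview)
Your overall strategy matches the paper's: apply the Rellich--Payne--Weinberger identity with $X = r^{s} e^{r^{-2\ve}} Z$, substitute the conjugated PDE for $v$, and show the resulting bulk terms have a favorable sign via $2\ve<\alpha$. The paper works integrally on a full annulus $\{t<|x|<\tau\}$ rather than differentially, but that is equivalent to your thin-shell limit.

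Two substantive points are missing or incorrect. First, you never fix $s$. The paper's choice $s=2-n$ is essential: it kills two $\ell^2$-weighted volume terms (the first and sixth integrals in \eqref{hj4}) that arise when one integrates $Z(v^2)$ against the zero-order coefficient of the conjugated PDE. Without this cancellation those terms carry the wrong sign at leading order and cannot be absorbed. Second, your absorption mechanism for the zero-order errors is wrong. You assert that $Cr^{-\alpha}v^2$ is absorbed by $\ell^2\ve^2 r^{-2-2\ve}\era v^2\mu$ because ``$2\ve<\alpha$ forces the ratio $r^{-(\alpha-2\ve)}$ to vanish,'' but the actual ratio is $r^{-\alpha}/r^{-2-2\ve}=r^{2+2\ve-\alpha}$, which diverges whenever $\alpha<2+2\ve$; since only $\alpha>2\ve$ is assumed, this fails in general. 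In the paper the zero-order errors live at two separate scales: the $\ell$-independent error $C\int r^{s-\alpha}\era v^2$ is absorbed by the eigenvalue term $2\int r^{s}\era v^2$ (see \eqref{ine5}), while the $\ell^2$-weighted error $C\ell^2\int r^{s-2-\alpha}\era v^2$ is absorbed by the positive term $2\ell^2\ve^2\int r^{s-2-\ve}\era v^2\mu$ (see \eqref{hog1}). Note the exponent $-\ve$, not $-2\ve$: this term is generated by integrating by parts the cross term $-2\ell^2\ve r^{-2-\ve}$ in the PDE coefficient, and it is precisely here that the exponential factor in $h$ (not merely the one in $X$) earns its keep. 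The condition $2\ve<\alpha$ is used only for the gradient error \eqref{ine1}.

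A smaller omission: the final term $-\int_{\partial B_\rho} r^{-1}\era v^2\mu$ in $F(\ell,\rho)$ does not emerge from the PW identity or the PDE; it is inserted by hand (step \eqref{k50}) for later use in Lemma~\ref{post1}, and its contribution must be separately controlled via Cauchy--Schwarz against the $(Zv)^2$ and $v^2$ terms already present.
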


The next positivity result is a crucial consequence of Theorem \ref{go1}.

\begin{lemma}\label{post}
Assume $0<\ve <\frac{\alpha}{2}$, and that  $u \not \equiv 0$ in $\Om$. There exist $R_0 = R_0(u,n,\alpha), \ell_0(u,n,\alpha,\ve)>>1$, such that  for all $\rho \geq R_0$ 
\[
F(\ell_0, \rho)>0.
\]
\end{lemma}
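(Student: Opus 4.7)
The plan is to leverage Theorem~\ref{go1}: once we pin down a pair $(\rho_0,\ell_0)$ with $\rho_0 \geq r_0$, $\ell_0 \geq \tilde\ell$, and $F(\ell_0,\rho_0) > 0$, the monotonicity of $\rho \mapsto F(\ell_0,\rho)/\rho^{n-1}$ immediately gives positivity on the whole range $\rho \geq \rho_0$, and we may take $R_0 := \rho_0$. To produce the radius $\rho_0$, I would appeal to weak unique continuation for the equation \eqref{mn}: if $\int_{\partial B_\rho} u^2\, d\sigma = 0$ for every $\rho \geq r_0$, then $u$ vanishes on the open set $\Om \setminus \overline{B_{r_0}}$, forcing $u \equiv 0$ in $\Om$ and contradicting the hypothesis. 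Hence there exists $\rho_0 \geq \max(r_0,(2\ve)^{1/\ve})$, the second lower bound serving only to ensure $1 - \ve\rho_0^{-\ve} > 0$, at which $\int_{\partial B_{\rho_0}} u^2\, d\sigma > 0$.

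At this fixed $\rho_0$, I would expand $F(\ell,\rho_0)$ as a quadratic polynomial in $\ell$. Writing $v = h u$ and using the properties of $Z$ collected in Lemma~\ref{L:rad} together with \eqref{hg}, one finds $Zh/r = h'(r)$, so that
\[
(Zv/r)^2\mu = h^2(Zu/r)^2\mu + 2h h'\, u\,(Zu/r)\,\mu + (h')^2 u^2\mu,
\]
and the analogous decomposition holds for $\langle A\nabla v,\nabla v\rangle$; consequently, the $\ell^2$-part of $2(Zv/r)^2\mu - \langle A\nabla v,\nabla v\rangle$ collapses to $(h')^2 u^2\mu = \ell^2 h^2 r^{-2}(1-\ve r^{-\ve})^2 u^2\mu$. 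Collecting this with the $\ell^2$ contributions from the remaining summands of \eqref{g1}, namely those produced by $\ell(\ell-n+2)r^{-2}v^2\mu$, $-\lt r^{-2-\ve}v^2\mu$, and $\ell^2\ve^2 r^{-2-2\ve}v^2\mu$, and invoking the telescoping identity
\[
r^{-2}(1-\ve r^{-\ve})^2 + r^{-2} - 2\ve r^{-2-\ve} + \ve^2 r^{-2-2\ve} = 2r^{-2}(1-\ve r^{-\ve})^2,
\]
yields the leading coefficient
\[
A(\rho_0) = 2\rho_0^{-2}(1-\ve\rho_0^{-\ve})^2 \int_{\partial B_{\rho_0}} \era\, h(\rho_0)^2 u^2 \mu\, d\sigma > 0,
\]
where positivity uses $\mu \geq \la$ together with the choice of $\rho_0$. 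Since the subleading coefficients of the $\ell$-quadratic $F(\ell,\rho_0)$ are finite (they involve only the $W^{1,2}$-norm of $u$ on the sphere $\partial B_{\rho_0}$), picking $\ell_0 \geq \tilde\ell$ large enough, depending on $u,n,\alpha,\ve$, gives $F(\ell_0,\rho_0) > 0$, and monotonicity from Theorem~\ref{go1} closes the argument.

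I expect the main obstacle to lie in the telescoping identity above: the specific exponents $-2,-2-\ve,-2-2\ve$ in the zero-order part of \eqref{g1} and their numerical coefficients are engineered precisely so that all the $\ell^2$ contributions to $F$ collapse into the single positive square $2r^{-2}(1-\ve r^{-\ve})^2 h^2 u^2\mu$. If even one coefficient were mistuned, the $\ell^2$ term could become sign-indefinite or vanish outright, and then no choice of $\ell_0$ would force positivity at $\rho_0$; the strategy would fail at precisely this algebraic step. Verifying the cancellation, rather than the unique-continuation reduction or the subsequent monotonicity propagation, is therefore the real content of the lemma.
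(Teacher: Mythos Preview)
Your argument is correct and follows the same strategy as the paper: locate via unique continuation a radius $\rho_0\ge r_0$ with $\int_{\partial B_{\rho_0}}u^2>0$, force $F(\ell_0,\rho_0)>0$ by $\ell^2$-dominance, then propagate by Theorem~\ref{go1}. Two small remarks. First, your ``leading coefficient'' $A(\rho_0)$ still carries the factor $h(\rho_0)^2=\rho_0^{2\ell}e^{2\ell\rho_0^{-\ve}}$, which depends on $\ell$; the clean formulation is that $F(\ell,\rho_0)/h(\rho_0)^2$ is the genuine quadratic in $\ell$, with leading coefficient $2\rho_0^{-2}(1-\ve\rho_0^{-\ve})^2\int_{\partial B_{\rho_0}}e^{\rho_0^{-2\ve}}u^2\mu$. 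Second, your assessment that the telescoping identity is ``the real content of the lemma'' is overstated. The paper never uses it: it simply drops the nonnegative term $\int(Zv/r)^2\mu\,\era$ via \eqref{hg1}, and bounds the zero-order block crudely by $\tfrac12\ell(\ell-n+2)\rho_0^{-2}\int v^2\mu\,\era$ for $\rho_0$ large (this is \eqref{hg2}), which already gives a positive $\ell^2$-coefficient. Your exact-square identity is a nice observation, but positivity of the quadratic is robust and requires no cancellation---each of the four $\ell^2$-contributions $r^{-2}(1-\ve r^{-\ve})^2$, $r^{-2}$, $-2\ve r^{-2-\ve}$, $\ve^2 r^{-2-2\ve}$ is individually dominated by $r^{-2}$ once $\rho_0$ is large.
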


\begin{proof}
We first note that, for $\rho$ large enough, we have  
\begin{align}\label{hg2}
& \f(\f -n+2) \int_{\partial B_\rho} r^{-2}\era v^2 \mu  - \lt \int_{\partial B_\rho} r^{-2-\ve} \era  v^2   \mu
\\
& + \f^2 \ve^2 \int_{\partial B_\rho} r^{-2-2\ve} \era v^2\mu\ 
 \geq\ \frac{\f(\f -n+2)}{2} \int_{\partial B_\rho} r^{-2}\era v^2\mu.
\notag
\end{align}
Using \eqref{hg}-\eqref{hg2}, it thus follows that for all $\rho$ sufficiently large
\begin{align}\label{gh1}
F(\ell, \rho) & \geq \rho^{2\ell}e^{2\ell \rho^{-\ve}} \int_{\partial B_{\rho}}\bigg(\left(\frac{Zu}{r}\right)^2 \mu -\langle A \nabla u, \nabla u\rangle \bigg) \era
\\
& +\rho^{2\ell}e^{2\ell \rho^{-\ve}} \int_{\partial B_\rho} \bigg(\frac{\f(\f -n+2)}{2}  r^{-2} u^2  - r^{-1} u^2 \bigg)\mu\era.\notag
\end{align}
Since $u \not \equiv 0$ and solves  \eqref{mn} in $\Rn \setminus B_1$, by the unique continuation property, see \cite{GL, GLcpam}, there exists $R_0 = R_0(u,n,\alpha)>>1$ such that 
\[
\int_{\partial B_{R_0}} u^2>0.
\]
Using this bound in \eqref{gh1}, we infer the existence of $\ell_0(u,n,\alpha,\ve)>>1$ such that $F(\ell_0, R_0) >0$. From this positivity, the desired conclusion follows from Theorem \ref{go1}.

\end{proof}

We next introduce a second functional which is also critical to our results. 

\begin{dfn}\label{D:G}
With $u$ as in \eqref{mn}, we let $w=r^{\frac{n-1}{2}}  u$.
For every $\rho>1$, we define
\begin{align}\label{g2}
&G(\rho)\overset{def}= 2 \int_{\partial B_\rho} \left(\frac{Zw}{r} \right)^2 \mu  - \int_{\partial B_\rho} \langle A\nabla w, \nabla w \rangle 
 + \int_{\partial B_\rho}  w^2  -\frac{(n-1)(n-3)}{4} \int_{\partial B_\rho} \frac{w^2}{r^{2}}\mu.
\end{align}
\end{dfn}

We will need the following delicate conditional positivity  result for $G(\rho)$. 

\begin{lemma}\label{post1}
Let $u$ be as in \eqref{mn}. Suppose that for some  $\delta\in (0,1)$ we have
\begin{equation}\label{hyp}
\liminf_{R \to \infty}\frac{1}{R^{1-\delta}}\int_{R<|x| < 2R}  u^2=0.
\end{equation}
Then there exists $r_j \to \infty$ such that $G(r_j)>0$. 
\end{lemma}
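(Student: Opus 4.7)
We argue by contradiction. Assume there exists $R^{*}>1$ such that $G(\rho)\leq 0$ for all $\rho\geq R^{*}$. The plan is to combine this assumption with the PDE $-\operatorname{div}(A\nabla u)=u$, Caccioppoli-type cutoff estimates, and the conditional positivity already established in Lemma \ref{post}, so as to derive a lower bound on $\int_{B_{2R}\setminus B_R} u^{2}\,dx$ that contradicts the hypothesis \eqref{hyp}.

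First, I would extract a pointwise consequence of $G(\rho)\leq 0$: dropping the manifestly non-negative term $2\int_{\partial B_\rho}(Zw/r)^{2}\mu$ and absorbing the zero-order term $\tfrac{(n-1)(n-3)}{4}\int_{\partial B_\rho}w^{2}\mu/r^{2}$, which is of order $\rho^{-2}\int_{\partial B_\rho} w^{2}$, for $\rho$ sufficiently large I obtain
\[
\int_{\partial B_\rho} w^{2}\, d\sigma \;\leq\; 2\int_{\partial B_\rho}\langle A\nabla w, \nabla w\rangle\, d\sigma.
\]
Integrating over a dyadic shell $[R,2R]$ with $R=R_j$ drawn from the sequence in \eqref{hyp}, using $w=r^{(n-1)/2}u$ together with the elementary bounds $\int w^{2} \geq R^{n-1}\int u^{2}$ and $|\nabla w|^{2}\leq (1+\varepsilon) r^{n-1}|\nabla u|^{2}+C_\varepsilon r^{n-3} u^{2}$, and absorbing the $r^{n-3}u^{2}$-correction, I arrive at a Friedrichs-type inequality on the annulus of the form
\[
\int_{B_{2R}\setminus B_R} u^{2}\, dx \;\leq\; C_{0}\int_{B_{2R}\setminus B_R}|\nabla u|^{2}\, dx.
\]

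Next, testing $-\operatorname{div}(A\nabla u)=u$ against $u$ on the annulus yields
\[
\int_{B_{2R}\setminus B_R}\!\langle A\nabla u,\nabla u\rangle\, dx = \int_{B_{2R}\setminus B_R} u^{2}\, dx + \int_{\partial B_R} u\langle A\nabla u, \hat r\rangle\, d\sigma - \int_{\partial B_{2R}} u\langle A\nabla u, \hat r\rangle\, d\sigma.
\]
Combining this with the Friedrichs inequality and performing the absorption (made available by the specific structure of the constants in the definition of $G$, together with \eqref{ell}), I obtain a bound of the form
\[
\int_{B_{2R}\setminus B_R} u^{2}\, dx \;\leq\; C_{1}\,\Bigl|\int_{\partial B_R \cup \partial B_{2R}} u\langle A\nabla u, \hat r\rangle\, d\sigma\Bigr|.
\]
Along the sequence $R_j$, a mean-value selection within $[R_j, 2R_j]$ combined with an interior Caccioppoli estimate for $\nabla u$ with cutoffs supported inside $[R_j, 2R_j]$ then produces boundary radii where the flux on the right is at most $o(R_j^{1-\delta})$.

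Finally, the positivity $F(\ell_{0},\rho)>0$ for all $\rho\geq R_{0}$ supplied by Lemma \ref{post}, together with the interpretation of $F$ as a weighted $L^{2}$-quantity built from $u$, forces the quantitative lower bound $\int_{B_{2R_j}\setminus B_{R_j}} u^{2}\, dx \geq c\, R_j^{1-\delta}$, which is incompatible with the $o(R_j^{1-\delta})$ upper bound derived above; this contradiction completes the argument and produces the desired sequence $r_j\to\infty$ with $G(r_j)>0$. The main technical obstacle is the very last step: converting the abstract positivity $F(\ell_{0},\rho)>0$ into a matching quantitative lower bound on $\int u^{2}$ with the sharp exponent $R^{1-\delta}$, which requires carefully tracking the variable-coefficient error terms from $B(x)=A(x)-I_n$ via the asymptotic flatness \eqref{st}, in the spirit of the monotonicity analysis underlying Theorems \ref{go1} and \ref{mong2}. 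A secondary subtle point is the mean-value selection of the boundary radii, since \eqref{hyp} only controls the $L^{2}$-norm on the dyadic annulus $[R_j, 2R_j]$ and not on any enlargement, which forces the Caccioppoli cutoffs to be confined strictly inside $[R_j, 2R_j]$.
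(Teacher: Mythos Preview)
Your proposal has two genuine gaps, and the overall architecture misses the key mechanism by which $F$ and $G$ are linked.

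\textbf{The absorption step does not close.} From $G(\rho)\le 0$ you extract, in $u$-variables, essentially
\[
\int_{\partial B_\rho} u^{2}\;\le\;\int_{\partial B_\rho}\langle A\nabla u,\nabla u\rangle + O(\rho^{-2})(\cdots),
\]
with leading coefficient exactly $1$ (not some $C_0$ depending on $\lambda$; the ``specific structure of the constants in $G$'' gives you precisely this). But testing the equation $-\operatorname{div}(A\nabla u)=u$ against $u$ on the annulus gives
\[
\int_{B_{2R}\setminus B_R}\langle A\nabla u,\nabla u\rangle \;=\; \int_{B_{2R}\setminus B_R} u^{2} + \text{boundary flux}.
\]
Combining these two, the principal terms cancel exactly, and you are left with $0\le \text{boundary flux}+O(R^{-2})(\cdots)$, which is no contradiction. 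There is no room to absorb anything, so the chain ``$G\le 0$ $\Rightarrow$ Friedrichs $\Rightarrow$ $\int u^{2}\le C_1|\text{flux}|$'' does not go through.

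\textbf{The final step is circular.} You invoke $F(\ell_0,\rho)>0$ to force $\int_{B_{2R}\setminus B_R}u^{2}\ge c\,R^{1-\delta}$, but this is precisely the content of Theorem~\ref{main}, whose proof in the paper relies on Lemma~\ref{post1} (the statement you are trying to prove) together with the monotonicity of $G$ in Theorem~\ref{mong2}. There is no direct route from the bare positivity of the spherical functional $F(\ell_0,\rho)$ to a dyadic $L^2$ lower bound with exponent $1-\delta$; $F$ contains gradient and weighted terms of both signs and does not immediately dominate $\rho^{n-\delta}\int_{\partial B_\rho}u^2$.

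\textbf{What the paper actually does.} The argument proceeds through the auxiliary spherical quantity $\int_{\partial B_\rho} Z(w^{2})\,\mu$. First, a short divergence-theorem computation with the weight $r^{-n+\delta}$ (not using $F$ at all) shows that if $\int_{\partial B_\rho} Z(w^{2})\,\mu>0$ for all large $\rho$, then $\int_{B_{2R}\setminus B_R}u^{2}\ge c\,R^{1-\delta}$, contradicting \eqref{hyp}; hence there is a sequence $r_j\to\infty$ with $\int_{\partial B_{r_j}}Z(w^{2})\,\mu\le 0$. Second, a direct algebraic expansion of $F(\ell_0,\rho)$ in terms of $w$ (writing $v=r^{\ell_1}e^{\ell_0 r^{-\ve}}w$ with $\ell_1=\ell_0-\tfrac{n-1}{2}$) yields, for large $\rho$,
\[
0\;<\;F(\ell_0,\rho)\;\le\;\rho^{2\ell_1}e^{2\ell_0\rho^{-\ve}}e^{\rho^{-2\ve}}\Bigl\{G(\rho)+\rho^{-2}\bigl(\ell_1-\ell_0\ve\rho^{-\ve}\bigr)\int_{\partial B_\rho}Z(w^{2})\,\mu\Bigr\}.
\]
On the sequence $r_j$ the bracketed integral is $\le 0$ and its coefficient is positive, so $G(r_j)>0$ follows immediately. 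The point you are missing is this explicit algebraic bridge between $F$ and $G$ via $\int_{\partial B_\rho}Z(w^{2})\,\mu$; the lower bound on $\int u^{2}$ is never derived from $F$ directly.
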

             
\begin{proof}
\underline{Step 1:} We show that \eqref{hyp} implies the existence of a sequence $r_j \to \infty$ such that
\begin{equation}\label{hyp1}
\int_{\partial B_{r_j}}  Z(w^2)\ \mu \leq 0.   
\end{equation}
To prove \eqref{hyp1} we argue by contradiction and assume that there exist $R_1>R_0$ such that for every $\rho \geq R_1$
\begin{equation}\label{hyp2}
\int_{\partial B_\rho}  Z(w^2)\ \mu >0.
\end{equation}  
We will prove that \eqref{hyp2} implies 
\begin{equation}\label{hyp12}
\liminf_{R \to \infty}\frac{1}{R^{1-\delta}}\int_{R<|x| < 2R}  u^2>0,
\end{equation}
thus contradicting \eqref{hyp}.
Keeping in mind that $w=r^{\frac{n-1}{2}} u$, we apply the divergence theorem to the domain $\mathcal A=\{x\in \Rn\mid R_1< |x| <R\}$, for $R\geq 2R_1$. Using (iv), (v) in Lemma \ref{dc1} and \eqref{Zrad}, we obtain the following crucial strict positivity
\begin{align}\label{t10}
& \int_{\partial  \mathcal A}  r^{-n+\delta}  w^2  \langle Z, \nu \rangle \mu = \int_{\mathcal A} \D( r^{-n+\delta}  w^2 \mu Z)
\\
& = \int_{\mathcal A} r^{-n+\delta}  w^2 (n+O(r^{-\alpha}))\mu + \int_{\mathcal A} (\delta-n)r^{-n+\delta}  w^2 \mu  + \int_{\mathcal A} r^{-n+\delta}  Z(w^2)\mu
\notag
\\
& = \delta \int_{\mathcal A} r^{-n+\delta} w^2 \mu + \int_{\mathcal A} O(1) r^{-n+\delta -\alpha}   w^2 \mu + \int_{\mathcal A} r^{-n+\delta}  Z(w^2)\mu\ \ge\ C(u)\ >\ 0.
\notag                       
\end{align} 
To achieve the last inequality in \eqref{t10} observe that, since $\delta>0$, if we choose  $R_1$ large enough, one has
\[
\delta \int_{\mathcal A} r^{-n+\delta} w^2 \mu + \int_{\mathcal A} O(1) r^{-n+\delta -\alpha}   w^2 \mu \geq 0,
\]
whereas \eqref{hyp2} gives              
\[
\int_{\mathcal A} r^{-n+\delta}  Z(w^2)\mu = \int_{R_1}^{R} r^{-n+\delta} \left(\int_{\partial B_r}  Z(w^2)\mu\right)   dr \ge   \int_{R_1}^{2R_1} r^{-n+\delta} \left(\int_{\partial B_r}  Z(w^2) \mu\right) dr= C(u)>0.
\]   
By (i) in Lemma \ref{dc1}, and by the fact that $u = r^{\frac{1-n}2} w$, we have
\begin{align*}
R^\delta \int_{\p B_R}    u^2 \mu & \ge R^\delta \int_{\p B_R}  u^2 \mu - R_1^\delta \int_{\p B_{R_1}}    u^2 \mu
 = \int_{\partial \mathcal A}  r^{-n+\delta}  w^2  \langle Z, \nu \rangle \mu 
 \\
 & =  \int_{\partial \mathcal A}  r^{-n+\delta}  w^2 \langle Z, \nu \rangle \mu \ \ge\ C(u)\ >\ 0,
\end{align*}
where in the last inequality we have used \eqref{t10}. We thus obtain for all $R\geq 2R_1$ 
\begin{equation}\label{t11}
\int_{R<|x|<2R} u^2 \mu  \geq  C(u) R^{1-\delta}.
\end{equation}
Since by \eqref{ellmu} one has $\mu \approx 1$, \eqref{t11} implies that \eqref{hyp12} must hold, which, as we have said, contradicts \eqref{hyp}. Therefore \eqref{hyp1} holds for a sequence $r_j \to \infty$. 

\vskip 0.2in

\noindent \underline{Step 2:} We next show that, on this sequence, we must have $G(r_j)>0$ for every $j\in \mathbb N$.
Let $\ell_0$ be as in Lemma \ref{post}, so that 
\[
F(\f_0, R_0) >0.
\] 
Keeping Remark \ref{R:v} in mind, from \eqref{g1} we have for $\rho \geq R_0$
\begin{align*}
&0< F(\ell_0, \rho) = 2 \int_{\partial B_\rho} \era \left(\frac{Zv}{r} \right)^2 \mu  - \int_{\partial B_\rho} \era \langle A\nabla v, \nabla v \rangle +\int_{\partial B_\rho} \era v^2 \\
& + \f_0(\f_0 -n+2) \int_{\partial B_\rho} r^{-2} \era v^2 \mu  - \lto \int_{\partial B_\rho} r^{-2-\ve} \era  v^2    \mu
\notag\\
& + \f_0^2 \ve^2 \int_{\partial B_\rho} r^{-2-2\ve} \era  v^2 \mu - \int_{\partial B_\rho} r^{-1} \era v^2 \mu .
\notag
\end{align*}
From this inequality, proceeding similarly to \eqref{hg1}, we obtain for $\rho \geq R_0$
\begin{align}\label{t1}
& 0< F(\f_0, \rho) \leq \int_{\partial B_{\rho}} \left(\frac{Zv}{\rho}\right)^2e^{\rho^{-2\ve}}\mu + \rho^{2\f_0 - n+1}e^{2\ell_0 \rho^{-\ve}}e^{\rho^{-2\ve}}\int_{\partial B_{\rho}}\bigg(\left(\frac{Zw}{\rho}\right)^2 \mu -\langle A \nabla w, \nabla w\rangle \bigg)\\          
          & +   \rho^{2\f_0 - n+1}e^{2\ell_0 \rho^{-\ve}}e^{\rho^{-2\ve}} \int_{\partial B_{\rho}} w^2 \notag\\
          &+ \rho^{2\f_0} e^{2\ell_0 \rho^{-\ve}}e^{\rho^{-2\ve}}   \bigg( -\frac{1}{\rho} +\frac{\f_0(\f_0- n+2)}{\rho^2} + \frac{\f_0^2 \ve^2}{\rho^{2+2\ve}} \bigg) \int_{\partial B_{\rho}} u^2 \mu.\notag               
\end{align}
We now let $$\ell_1= \f_0 -\frac{n-1}{2}.$$
Recalling that $u=r^{-\frac{n-1}{2}} w$, and that $v= r(x)^{\ell_0} e^{\f r(x)^{-\ve}} u$ (see Remark \ref{R:v}), we can express $v$ in terms of $w$ as follows 
\[
v = r(x)^{\ell_0 - \frac{n-1}{2}} e^{\ell_0 r(x)^{-\ve}} w = r(x)^{\ell_1} e^{\ell_0 r(x)^{-\ve}} w.
\]
By (i) in Lemma \ref{dc1} we thus find
\[
Zv = \left(\ell_1 r^{\ell_1}  - \ve \ell_0 r^{\ell_1 - \ve}\right) e^{\ell_0 r(x)^{-\ve}} w + r(x)^{\ell_1} e^{\ell_0 r(x)^{-\ve}} Zw.
\]
Using this information, and expanding the first term in the right-hand side of \eqref{t1}, we obtain the following 
\begin{align}\label{t2}
& \int_{\partial B_{\rho}} \left(\frac{Zv}{r}\right)^2e^{r^{-2\ve}}\mu= \left(\f_1^2 \rho^{2(\f_1-1)} + \f_0^2 \ve^2 \rho^{2(\f_1-1-\ve)} - 2\f_1\f_0\ve \rho^{(2(\f_1-1)-\ve)}\right)e^{2\ell_0 \rho^{-\ve}}e^{\rho^{-2\ve}} \int_{\partial B_\rho} w^2\mu\\
            &+ \rho^{2\f_1} e^{2\ell_0 \rho^{-\ve}}e^{\rho^{-2\ve}}  \int_{\partial B_\rho}\left(\frac{Zw}{r}\right)^2 \mu \notag   \\
            & + (\f_1\rho^{2(\f_1 -1)} - \f_0 \ve \rho^{2\f_1-2-\ve}) e^{2\ell_0 \rho^{-\ve}}e^{\rho^{-2\ve}} \int_{\partial B_{\rho}} Z(w^2)\mu.\notag
\end{align}   
Inserting \eqref{t2} in \eqref{t1}, and keeping Definition \ref{D:G} in mind, we find
\begin{align}\label{t4}
&  0< F(\f_0, \rho) \leq \rho^{2\f_1}   e^{2\ell_0 \rho^{-\ve}}e^{\rho^{-2\ve}} G(\rho)\\
                                 & + \rho^{2\f_0} e^{2\ell_0 \rho^{-\ve}}e^{\rho^{-2\ve}}\bigg(-\frac{1}{\rho}     +\frac{\f_0(\f_0- n+2)}{\rho^2} + \frac{\f_0^2 \ve^2}{\rho^{2+2\ve}}+ \frac{(n-1)(n-3)}{4 \rho^2}  \bigg) \int_{\partial B_\rho} u^2\mu\notag\\
                                &  +\rho^{2\f_0} e^{2\ell_0 \rho^{-\ve}}e^{\rho^{-2\ve}}\bigg(    \frac{\f_1^2}{\rho^2}  + \frac{\f_0^2 \ve^2}{\rho^{2+2\ve}}    \bigg)\int_{\partial B_\rho} u^2\mu\notag\\
                                & + (\f_1\rho^{2(\f_1 -1)} - \f_0 \ve \rho^{2\f_1-2-\ve}) e^{2\ell_0 \rho^{-\ve}}e^{\rho^{-2\ve}} \int_{\partial B_{\rho}} Z(w^2)\mu.\notag                                 
\end{align}
Next, we observe that, for $\rho$ large enough, we can achieve
\begin{align}\label{sh}
& \rho^{2\f_0} e^{2\ell_0 \rho^{-\ve}}e^{\rho^{-2\ve}}\bigg(-\frac{1}{\rho}     +\frac{\f_0(\f_0- n+2)}{\rho^2} + \frac{2\f_0^2 \ve^2}{\rho^{2+2\ve}}+ \frac{(n-1)(n-3)}{4 \rho^2}  \bigg) \int_{\partial B_\rho} u^2\mu
\\
&  +\rho^{2\f_0} e^{2\ell_0 \rho^{-\ve}}e^{\rho^{-2\ve}}\bigg(\frac{\f_1^2}{\rho^2}  + \frac{\f_0^2 \ve^2}{\rho^{2+2\ve}}\bigg)\int_{\partial B_\rho} u^2\mu  \leq 0.
\notag
\end{align}  
In fact, after elementary simplifications, this inequality is equivalent to having 
\begin{align*}
&  \left(\f_1^2  + \frac{\f_0^2 \ve^2}{\rho^{2\ve}}\right)\int_{\partial B_\rho} u^2\mu 
\\
& \le \left(\rho - \f_0(\f_0- n+2) - \frac{2\f_0^2 \ve^2}{\rho^{2\ve}} - \frac{(n-1)(n-3)}{4}\right) \int_{\partial B_\rho} u^2\mu,
\end{align*}   
which, since Lemma \ref{dc1} implies that $\mu(x) \to 1$ uniformly as $r=|x| \to \infty$, is obviously valid for large $\rho$. The estimates \eqref{t4}, \eqref{sh} thus imply
\begin{align}\label{t8}
& 0<  F(\f_0, \rho) \leq \rho^{2\f_1}   e^{2\ell_0 \rho^{-\ve}}e^{\rho^{-2\ve}} \left\{G(\rho) + \rho^{-2}(\f_1 - \f_0 \ve \rho^{-\ve})  \int_{\partial B_{\rho}} Z(w^2)\mu\right\}.                                 
\end{align}          
Since for large enough $\rho$, depending on $\ell_0$, we can ensure that
\[
(\f_1 - \f_0 \ve \rho^{-\ve}) > 0,
\]
it is now clear that the combination of \eqref{t8} and \eqref{hyp1} implies the conclusion of the lemma.                                                                                                          
                                          
\end{proof}

The following monotonicity theorem is the second main result in this section.

\begin{thrm}\label{mong2}
Given $\delta \in (0,1)$,  there exists $R_1 = R_1(u,n,\alpha,\delta)>>1$, such that the function 
\[
\rho\ \longrightarrow\  \frac{G(\rho)}{\rho^{n-1-\delta}}
\]
is non-decreasing on the interval $[R_1,\infty)$.
\end{thrm}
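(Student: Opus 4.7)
The goal is to establish the differential inequality
\[
\rho\, G'(\rho) \geq (n-1-\delta)\, G(\rho) \qquad \text{for all } \rho \geq R_1,
\]
which is equivalent to the claimed monotonicity of $\rho \mapsto G(\rho)/\rho^{n-1-\delta}$. I would begin by computing $G'(\rho)$ via the coarea formula applied to each of the four surface integrals in \eqref{g2}. This expresses $G'(\rho)$ as a surface integral on $\partial B_\rho$ involving $w$, $\nabla w$, and their normal derivatives. The PDE $-\D(A \nabla u) = u$ induces, under the substitution $w = r^{\frac{n-1}{2}} u$, a modified equation for $w$ whose zeroth-order coefficient carries exactly the factor $(n-1)(n-3)/(4 r^2)\mu$ appearing in the definition of $G$. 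This cancellation, which mirrors the conformal elimination of the first-order term in the radial reduction of $-\Delta u = u$, is the algebraic heart of the proof and determines why the coefficient in front of the last term of $G(\rho)$ must be exactly $(n-1)(n-3)/4$.

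With $G'(\rho)$ reorganized, I would apply the Rellich--Payne--Weinberger identity of Proposition \ref{P:PW} using the vector field $Z$ from Section \ref{S:key} as multiplier. By Lemma \ref{dc1}, $Z$ coincides with the Euler field $x$ up to errors of order $O(r^{-\alpha})$, and its dilation properties convert the bulk terms produced by the identity into a positive principal contribution comparable to $2\int_{\partial B_\rho}(Zw/r)^2 \mu$, together with boundary traces of $\langle A \nabla w, \nabla w\rangle$, of $w^2$, and of $w^2/r^2\mu$, which match precisely the remaining terms of $G(\rho)$. After this matching, the dominant part of $\rho G'(\rho) - (n-1) G(\rho)$ appears as a manifestly non-negative quadratic form.

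What remains is the absorption of the remainder terms. These fall into two groups: contributions from $B = A - I_n$ and $\nabla B$, controlled by \eqref{st} at rate $O(r^{-\alpha})$, and geometric remainders measuring the failure of $Z$ to be the exact Euler field, which are controlled by Lemma \ref{dc1} and by the uniform convergence $\mu \to 1$ at infinity. For $\rho \geq R_1$ chosen sufficiently large (depending on $u, n, \alpha, C, \delta$), the sum of all such errors is bounded by $\delta\, G(\rho)/\rho$, producing the desired inequality. The cost of this absorption is precisely the $\delta$ loss in the exponent: unlike in Theorem \ref{go1}, no exponential weight $e^{r^{-2\ve}}$ is available here to absorb first-order errors, so the decay from \eqref{st} must do all the work unaided.

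The main obstacle will be the exact algebraic verification that the Rellich--Payne--Weinberger identity, combined with the conformal substitution $w = r^{\frac{n-1}{2}} u$, reproduces the coefficient $(n-1)(n-3)/4$ and cancels all spurious boundary $w^2/r^2$ contributions; getting any of these coefficients wrong would prevent the residual quadratic form from being non-negative. A secondary technical difficulty is that several of the error terms are first-order in $\nabla w$, and must be absorbed into the quadratic principal part by a Cauchy--Schwarz argument whose constants are carefully tuned against the decay rate $r^{-\alpha}$, without disturbing the positive leading term.
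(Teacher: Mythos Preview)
Your plan matches the paper's proof in structure: derive the equation for $w=r^{(n-1)/2}u$ (which does produce exactly the $(n-1)(n-3)/(4r^2)\,\mu$ term), apply the Rellich--Payne--Weinberger identity, and absorb the $O(r^{-\alpha})$ remainders using the $\delta$-slack. The paper works in the integrated form---applying Proposition~\ref{P:PW} on an annulus $\{t<|x|<\tau\}$ with the weighted field $X=r^{-n+\delta}Z$ and reading off $\tau^{-(n-1-\delta)}G(\tau)\ge t^{-(n-1-\delta)}G(t)$ from the boundary terms---rather than computing $G'(\rho)$, but that is a cosmetic difference.

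One step as you describe it would fail, however: you claim the errors are bounded by $\delta\,G(\rho)/\rho$, but $G(\rho)$ has no sign a priori---its positivity is established only \emph{after} the monotonicity, via Lemmas~\ref{post1} and~\ref{post2}---so this comparison is meaningless. In the paper the errors are absorbed not into $G$ but into a separate bulk quadratic form: after the identity and integration by parts, what remains on the solid side has the shape
\[
\bigl(-2+\tfrac{3\delta}{2}\bigr)\int_{\mathcal A} r^{-n-2+\delta}(Zw)^2\mu \;+\; \bigl(2-\tfrac{3\delta}{2}\bigr)\int_{\mathcal A} r^{-n+\delta}\langle A\nabla w,\nabla w\rangle,
\]
and this is non-negative precisely by the pointwise Cauchy--Schwarz inequality \eqref{rad}, namely $(Zw/r)^2\mu\le \langle A\nabla w,\nabla w\rangle$. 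The $\delta$ is spent in two places: a term $\delta\int r^{-n+\delta}w^2$ generated when $Z(w^2)$ is integrated by parts swallows the zeroth-order $O(r^{-\alpha})$ errors in $w$, and the first-order $O(r^{-\alpha})$ errors in $\nabla w$ shift the leading coefficients from $(-2,\,2)$ to $(-2+3\delta/2,\,2-3\delta/2)$ without destroying the sign pattern that lets \eqref{rad} close the argument. So the target of your absorption must be this bulk quadratic form, not $G(\rho)$ itself; your ``manifestly non-negative quadratic form'' is exactly this object, and you should route the errors into it rather than into $G$.
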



\begin{rmrk}
We recall that in Lemma 4.1 of \cite{BGjam}, a stronger monotonicity formula was established for the Baouendi-Grushin operator
\begin{equation}\label{pbeta}
\mathscr B_{\alpha} = \Delta_z  + \frac{|z|^{2\alpha}}4 \Delta_\sigma,
\end{equation}
where $z\in \Rm$, $\sigma\in \R^k$, and $\alpha>0$.  In comparison, the result in that paper corresponds to formally letting $\delta = 0$ in the statement of Theorem \ref{mong2}, as well as in assumption \eqref{hyp} in Lemma \ref{post1}.
In the variable-coefficient setting considered in the present work, the parameter $\delta$ becomes essential to absorb various error terms in a controlled manner, and its strict positivity plays a crucial role in establishing the key inequality \eqref{t10}.
\end{rmrk}

Theorem \ref{mong2} combined with Lemma \ref{post1} implies the following.

\begin{lemma}\label{post2}
Under the hypothesis of Lemma \ref{post1}, there exist $C_2>0$ and $R_2$ large enough such that for all $\rho\geq R_2$
\[ 
\frac{G(\rho)}{\rho^{n-1-\delta}}
 \geq C_2.
\]
\end{lemma}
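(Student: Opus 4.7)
The plan is to combine Lemma \ref{post1} with the monotonicity property in Theorem \ref{mong2} in a direct way. First I would invoke Lemma \ref{post1}, whose hypotheses are precisely the standing assumption of Lemma \ref{post2}, to produce a sequence $r_j \to \infty$ with $G(r_j) > 0$. Next, using Theorem \ref{mong2}, I pick $R_1 = R_1(u,n,\alpha,\delta)$ such that $\rho \mapsto G(\rho)/\rho^{n-1-\delta}$ is non-decreasing on $[R_1,\infty)$. Since $r_j \to \infty$, I can select a specific index $j_0$ so that $r_{j_0} \geq R_1$, and define
\[
R_2 := r_{j_0}, \qquad C_2 := \frac{G(r_{j_0})}{r_{j_0}^{n-1-\delta}} > 0.
\]

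Then, for any $\rho \geq R_2 = r_{j_0} \geq R_1$, the monotonicity from Theorem \ref{mong2} immediately gives
\[
\frac{G(\rho)}{\rho^{n-1-\delta}} \geq \frac{G(r_{j_0})}{r_{j_0}^{n-1-\delta}} = C_2,
\]
which is exactly the conclusion.

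I do not anticipate any real obstacle here: the argument is essentially a packaging step that combines one positivity input (Lemma \ref{post1}) with one monotonicity input (Theorem \ref{mong2}), with no further analytic work required. The only subtlety worth flagging is that both $R_2$ and $C_2$ depend on the particular sequence $\{r_j\}$ coming from Lemma \ref{post1}, and hence on $u$; but this dependence is compatible with the statement of the lemma, which allows the constants to depend on the solution. All the substantive content of the result has already been absorbed into the proofs of Theorem \ref{mong2} and Lemma \ref{post1}.
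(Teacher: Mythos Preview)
Your proposal is correct and matches the paper's approach exactly: the paper simply states that Lemma \ref{post2} follows from combining Theorem \ref{mong2} with Lemma \ref{post1}, which is precisely the argument you wrote out in detail.
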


We are finally ready to provide the                                             
                                             
\begin{proof}[Proof of Theorem \ref{main}]
We assume that $u \not \equiv 0$ in $\Om$, therefore by unique continuation, for any $R$ we must have $u\not\equiv 0$ in the set $\{|x|>R\}$ .
It suffices to show that \eqref{hyp2} hold, since the latter implies \eqref{hyp12}, as we have seen in the proof of Lemma \ref{post1}.
First, we note that from the definition \eqref{g2} of $G$, it follows that
\begin{align*} 
G(\rho)  & = 
2 \int_{\partial B_\rho} \left(\frac{Zw}{r} \right)^2 \mu + 2 \int_{\partial B_\rho} w^2  - \int_{\partial B_\rho} \langle A\nabla w, \nabla w \rangle 
- \int_{\partial B_\rho}  w^2\left(1  +\frac{(n-1)(n-3)\mu}{4 r^2}\right) 
\\
& \leq 2 \int_{\partial B_{\rho}} \left(\frac{Zw}{r}\right)^2 \mu + 2 \int_{\partial B_\rho} w^2,
\end{align*}
 for all $\rho$ large enough.
This inequality, combined with Lemma \ref{post2}, implies
\begin{align}\label{zn1}
\int_{\partial B_{\rho}} \left(\frac{Zw}{r}\right)^2 \mu +  \int_{\partial B_\rho} w^2 \geq C \rho^{n-1-\delta},
\end{align}
for all $\rho$ sufficiently large. Recalling that $u = r^{-\frac{n-1}{2}} w$, and using that $Z(r^{\frac{n-1}2}) = - \frac{n-1}2 r^{\frac{n-1}2}$ (see \eqref{Zrad}), we find
\begin{align*}
&  \left(\frac{Zu}{r}\right)^2= r^{1-n} \left( \left(\frac{n-1}{2}\right)^2 \frac{w^2}{r^2}   +\left(\frac{Zw}{r}\right)^2 - \frac{n-1}{r} w \frac{Zw}{r} \right).
\end{align*}
We thus have 
\begin{align*}
&\int_{\partial B_\rho} \left(\frac{Zu}{r}\right)^2 + \int_{\partial B_\rho} u^2  \geq \rho^{1-n} \bigg\{ \int_{\partial B_\rho} \left(\frac{Zw}{r}\right)^2 + \int_{\partial B_\rho} w^2
+ \left(\frac{n-1}{2\rho}\right)^2 \int_{\partial B_\rho}    w^2  -   \frac{n-1}{\rho} \int_{\partial B_\rho} \left|\frac{Zw}{r}\right||w| \bigg\}
\\
& \geq \frac{1}{2} \rho^{1-n} \left(\int_{\partial B_\rho} \left(\frac{Zw}{r}\right)^2 + \int_{\partial B_\rho} w^2\right), 
\notag
\end{align*}
for large enough $\rho$. Combined with \eqref{zn1}, this bound implies the following inequality 
\begin{equation}\label{c4}
\int_{\partial B_{\rho}} \left(\frac{Zu}{\rho}\right)^2 \mu + \int_{\partial B_{\rho}} u^2 \geq C_4\rho^{-\delta},
\end{equation}
for large $\rho$. Noting that \eqref{ell} and \eqref{rad} give
\begin{align*}
& \int_{\frac{5R}{4}<|x|<\frac{7R}{4}} |\nabla u|^2  +    \int_{\frac{5R}{4}<|x|<\frac{7R}{4}} u^2 \ge \la \int_{\frac{5R}{4}<|x|<\frac{7R}{4}} \sa A(x)\nabla u,\nabla u\da  +    \int_{\frac{5R}{4}<|x|<\frac{7R}{4}} u^2
\\
& = \int_{\frac{5R}{4}}^{\frac{7R}{4}} \int_{S_\rho} \left[\sa A(x)\nabla u,\nabla u\da +  u^2\right] \ge \int_{\frac{5R}{4}}^{\frac{7R}{4}} \int_{S_\rho}  \left[\left(\frac{Zu}{\rho}\right)^2 \mu +  u^2\right], 
\end{align*}
from \eqref{c4} we conclude 
\begin{equation}\label{c5}
\int_{\frac{5R}{4}<|x|<\frac{7R}{4}} |\nabla u|^2  +    \int_{\frac{5R}{4}<|x|<\frac{7R}{4}} u^2 \geq C R^{1-\delta},
\end{equation}
for all large $R$. Using the Caccioppoli inequality  
\[
\int_{\frac{5R}{4}<|x|<\frac{7R}{4}} |\nabla u|^2      \leq C \int_{R<|x|<2R} u^2
\]
in \eqref{c5}, we finally obtain for a new $C_1>0$ and all large $R$,
\[
\int_{R<|x|<2R} u^2       > C_1 R^{1-\delta}.
\]
This proves \eqref{hyp2}, thus completing the proof of the theorem.
                                                                                                            
\end{proof}

With Theorem \ref{main} in hands, we turn to the

\begin{proof}[Proof of Theorem \ref{main1}]
                                             
Let $u$ solve \eqref{mn}, and assume that $u \in L^p(\Rn \setminus B_1)$. We argue by contradiction and suppose that $u\not\equiv 0$. We first look at the case $p \in (0, 2)$. By the unique continuation property of the equation \eqref{mn}, we must have $u\not\equiv 0$ in $\Rn \setminus B_2$, and therefore $||u||_{L^\infty(\Rn \setminus B_2)}>0$.
On the other hand, for any $p>0$ one has the following estimate due to Moser
\begin{equation*}
||u||_{L^{\infty}(\Rn \setminus B_2)} \leq  \left(\int_{\Rn \setminus B_1} |u|^p\right)^{1/p},
\end{equation*}
for some $C = C(n, \lambda, p)>0$,           
see for instance \cite[Theor. 4.1]{HL}. Therefore, we also have  $||u||_{L^\infty(\Rn \setminus B_2)} < \infty$. If we now take $\delta=1/2$ in Theorem \ref{main}, then for all large $R$ we have
\begin{equation*}
C_1 R^{1/2} \le \int_{ B_{2R} \setminus B_R} u^2 dx \le ||u||^{2-p}_{L^\infty(\Rn \setminus B_2)} \int_{R<|x|<2R} |u|^p dx.
\end{equation*}
Since
\[
u \in L^p(\Rn \setminus B_1) \ \Longrightarrow\ \underset{R\to \infty}{\lim}\ \int_{R<|x|<2R} |u|^p dx = 0,
\]
we have reached a contradiction. This shows that $u\equiv 0$ in $\Rn \setminus B_1$.

We now look at the case $2\le p < \frac{2n}{n-1}$ and note that in such case we have $n\left(1-\frac 2p\right)< 1$.
Therefore, we can write 
\begin{equation}\label{delp}n\left(1-\frac 2p\right)=1-\delta,
\end{equation}
for some $\delta = \delta_p\in (0,1)$. 
Applying H\"older inequality to \eqref{goal} in Theorem \ref{main}, we obtain for all $R\geq R_\delta$
\begin{equation*}
C_1 R^{1-\delta} \le C(n, p) \left(\int_{R<|x|<2R} |u|^p dx\right)^{\frac 2p} R^{n(1-\frac 2p)}.
\end{equation*}
Using \eqref{delp}, we obtain from this inequality    
\[
0<C_1   \le C(n,p) \left(\int_{R<|x|<2R} |u|^p dx\right)^{\frac 2p},
\]
which again implies a contradiction.
This finishes the proof of the theorem.                                    
                                             
\end{proof}

\vskip 0.2in
 
\section{Proof of the monotonicity theorems}\label{pf}

This section is devoted to proving Theorems \ref{go1} and \ref{mong2}. We begin with the 
 
\begin{proof}[Proof of Theorem \ref{go1}]
Let $u$ be a solution to \eqref{mn} and denote by $v$ the function defined by \eqref{deZv}. Given $1<t<\tau<\infty$, consider the annular  region 
\begin{equation}\label{om}
\mathcal A =\{x\in \Rn\mid t <|x|<\tau\} \subset \Rn \setminus B_1.
\end{equation}
Our first step is to apply the Rellich-Payne-Weinberger identity in Proposition \ref{P:PW} with $U = \mathcal A$, $f = v$ and the special choice \eqref{X} of the vector field, which for ease of notation we momentarily write $X = \sigma(r) Z$, where $\sigma(r) = r^s e^{r^{-2\ve}}$. The parameter $s$ will be later chosen as in \eqref{choice1} in order to eliminate some terms in \eqref{hj4}. Before applying the identity, we make some observations. On $\p \mathcal A$ we have $\nu = \pm \nabla r$, depending on whether we are on the outer or inner part of the boundary. We thus find
\[
Xv \sa A\nabla v,\nu\da = \sigma(r) Zv \sa A\nabla v,\pm\nabla r\da = \pm \frac{\sigma(r)}{r} (Zv)^2\ \mu = \pm \sigma(r) \left(\frac{Zv}r\right)^2\ r\ \mu.
\]
On the other hand, we also have
\[
\sa Z,\nu\da = \pm \sa Z,\nabla r\da = \pm Zr = \pm r,
\]
where the last equality is justified by (i) in Lemma \ref{dc1}. Combining these identities, we find on $\p \mathcal A$
\begin{equation}\label{1}
Xv \sa A\nabla v,\nu\da = r^{s} \era \left(\frac{Zv}r\right)^2 \sa Z,\nu\da \mu.
\end{equation}
Also notice that by (i) and (v) in Lemma \ref{dc1}, we find
\begin{equation}\label{2}
\D X = \sigma(r) \D Z + \sigma'(r) Zr = (n+ s) r^s \era + r^{s-\alpha} \era O(1) - 2
\ve r^{s-2\ve}\era.
\end{equation}
One more observation is that
\begin{align}\label{3}
-2 a_{ij} [D_i, X] v D_j v & = \sigma(r) a_{ij} [D_i,Z] v D_j v +  \sigma'(r) Zv \sa A\nabla r,\nabla v\da 
\\
& = - 2 \sigma(r) \sa A\nabla v,\nabla v\da - 2 r \sigma'(r) \left(\frac{Zv}{r}\right)^2 \mu - 2 \sigma(r) a_{ij} \left[[D_i,Z] -D_i\right] v  D_j v
\notag\\
& = - 2 r^s \era \sa A\nabla v,\nabla v\da - 2 s r^s \era  \left(\frac{Zv}{r}\right)^2 \mu + 4\ve r^{s-2\ve}\era \left(\frac{Zv}{r}\right)^2 \mu
\notag\\
& \ \ \ + r^{s-\alpha}\era  \sa A\nabla v,\nabla v\da O(1),
\notag
\end{align}
where to pass from the first to the  second equality we have used \eqref{Zuf}, whereas we have used (vi) in Lemma \ref{dc1} to estimate the last term in the third equality.
Applying Proposition \ref{P:PW}, keeping \eqref{1}, \eqref{2} and \eqref{3} in mind, we thus obtain 
\begin{align}\label{k4}
& 2 \ine r^{s} \era Zv \D(A\nabla v)  = 2 \ineb r^{s} \era \left(\frac{Zv}{r}\right)^2 \langle Z, \nu\rangle \mu - \ineb r^{s}\era \langle A \nabla v, \nabla v\rangle \langle Z, \nu \rangle \\
&- 2s \ine r^{s-2} \era (Zv)^2 \mu  +4 \ve \ine r^{s-2-2\ve} \era (Zv)^2 \mu  +(n+s-2) \ine r^{s}\era \langle A\nabla v, \nabla v\rangle 
\notag\\
&- 2\ve \ine r^{s-2\ve} \era \langle A \nabla v, \nabla v \rangle  + \ine r^{s-\alpha} \era  \langle A \nabla v, \nabla v\rangle O(1). 
\notag
\end{align} 
We next express the term on the left-hand side of \eqref{k4} using the  equation \eqref{mn} satisfied by $u$.   
From \eqref{deZv}, with $k(r) = r^{-\f} e^{- \f r^{-\ve}}$, we have
\[
u= k(r) v,
\]
and therefore we obtain from \eqref{Zf}, \eqref{Zuf}
\begin{align*}
\D(A\nabla u) & = k(r) \D(A\nabla v) + v \D(A\nabla k(r)) + 2 \frac{k'(r)}{r}  Z v\ \mu.
\end{align*}
Keeping in mind that $u$ satisfies \eqref{mn}, we infer the following equation satisfied by $v$ 
\begin{align}\label{eqv}
\D(A\nabla v)= -v \left(1+  \frac{\D(A\nabla k(r))}{k(r)}\right) - 2 \frac{k'(r)}{rk(r)}  Z v\ \mu.
\end{align}
Next, using (i), (iii) and (v) in Lemma \ref{dc1}, we find
\begin{align}\label{eqk}
\D(A\nabla k(r)) & = \D(\frac{k'(r)}{r} \mu Z) = \frac{k'(r)}{r} \mu \D Z + Z(\frac{k'(r)}{r} \mu)
\\
& = \frac{k'(r)}{r} \left[n + O(r^{-\alpha})\right] \mu + \frac{k'(r)}{r} O(r^{-\alpha}) + \left(k''(r) - \frac{k'(r)}{r}\right) \mu.
\notag
\end{align}
A computation now gives
\[
\frac{k'(r)}{rk(r)} = \frac{\ell}{r^2} (\ve r^{-\ve}-1),
\]
and
\[
\frac{k''(r)}{k(r)} - \frac{k'(r)}{r k(r)} =  \frac{\ell^2}{r^2} (\ve r^{-\ve}-1)^2 -\ell \left[2 r^{-2}(\ve r^{-\ve}-1) + \ve^2 r^{-2-\ve}\right].
\]
Using the latter two equations in \eqref{eqk}, we find
\begin{align*}
\frac{\D(A\nabla k(r))}{k(r)} & = \frac{\ell}{r^2} (\ve r^{-\ve}-1) \left[(n + O(r^{-\alpha}))\ \mu +  O(r^{-\alpha})\right]
\\
& + \left\{\frac{\ell^2}{r^2} (\ve r^{-\ve}-1)^2 -\ell \left[2 r^{-2}(\ve r^{-\ve}-1) + \ve^2 r^{-2-\ve}\right]\right\} \mu
\notag
\\
& = \left\{\frac{\ell^2}{r^2} (\ve r^{-\ve}-1)^2 + \frac{\ell}{r^2} (\ve r^{-\ve}-1) (n-2+O(r^{-\alpha})) -\ell \ve^2 r^{-2-\ve}\right\} \mu.
\notag 
\end{align*}
Substitution in \eqref{eqv} gives
\begin{align}\label{eqv2}
\D(A\nabla v) & = -v \left[1+  \left(\frac{\ell^2}{r^2} (1- \ve r^{-\ve})^2 - \frac{\ell}{r^2} (1-\ve r^{-\ve}) (n-2+O(r^{-\alpha}))\right) \mu-\ell \ve^2 r^{-2-\ve}  \right]
\\
& \ \ \ \ + 2 \ell (1-\ve r^{-\ve})  Z v\ \mu.
\notag
\end{align}
With \eqref{eqv2} in hands,  noting that $Z(v^2) = 2 v Zv$, we find from \eqref{k4}
\begin{align}\label{lasterm}
& 2 \int_{\mathcal A} r^{s} e^{r^{-2\ve}} Zv \di(A\nabla v) = -  \int_{\mathcal A} r^{s} e^{r^{-2\ve}} Z(v^2) 
\\
& -  \int_{\mathcal A} r^{s} e^{r^{-2\ve}} Z(v^2) \left(\frac{\ell^2}{r^2} (1- \ve r^{-\ve})^2 - \frac{\ell}{r^2} (1-\ve r^{-\ve}) (n-2+O(r^{-\alpha})) -\ell \ve^2 r^{-2-\ve}\right) \mu
\notag\\
& + 4 \ell \int_{\mathcal A} r^{s+2} e^{r^{-2\ve}} (1-\ve r^{-\ve})  \left(\frac{Z v}{r}\right)^2 \mu
\notag
\end{align}

For the reader's understanding, we briefly describe our next strategy. In the steps \eqref{hj1}, \eqref{hj2} and \eqref{hj4} we use integration by parts to suitably reformulate each term in the right-hand side of \eqref{lasterm}. In addition, one of the resulting terms in \eqref{hj2} is estimated from below using the Cauchy-Schwarz inequality. After choosing the parameter $s$ appropriately, see \eqref{choice1}, the expression \eqref{hj4} is in this way converted into \eqref{hj5}. Combining \eqref{hj1}, \eqref{hj2} and \eqref{hj5}, we finally obtain the bound from below \eqref{kjh1}. With such bound in hands, we return to \eqref{k4}, which we now rewrite by collecting all boundary integrals on one side of the inequality, see \eqref{kjh0}. After one further manipulation, we finally reach the basic inequality \eqref{jkh1}. At that point, a suitable choice of the parameter $\ve$, in dependence of $\alpha$, allows us to establish for sufficiently large values of the inner radius $t$ of the ring $\mathcal A$ in \eqref{om}, and of the parameter $\ell$, the critical positivity bounds \eqref{hog1}, \eqref{ine1}, \eqref{ine5} 

Back to the implementation of the strategy just described. Integrating  by parts, and using (v) in Lemma   \ref{dc1}, we obtain
\begin{align}\label{hj1}
 -  \ine r^{s} e^{r^{-2\ve}} Z(v^2)  =\ &  (n+s) \ine r^s e^{r^{-2\ve}} v^2 - 2\ve \ine r^{s-2\ve} \era v^2  +  \ine r^{s-\alpha} \era v^2\ O(1) 
\\
&  - \ineb  r^{s} \era v^2 \langle Z, \nu \rangle. 
\notag
\end{align}
{\allowdisplaybreaks
Next, we have
\begin{align}\label{hj2}
&-  \int_{\mathcal A} r^{s} e^{r^{-2\ve}} Z(v^2) \left(\frac{\ell^2}{r^2} (1- \ve r^{-\ve})^2 - \frac{\ell}{r^2} (1-\ve r^{-\ve}) (n-2+O(r^{-\alpha})) -\ell \ve^2 r^{-2-\ve}\right) \mu
\\
&= -  \int_{\mathcal A} r^{s} e^{r^{-2\ve}} Z(v^2) \left(\frac{\ell^2}{r^2} (1- \ve r^{-\ve})^2 - \frac{\ell}{r^2} (1-\ve r^{-\ve}) (n-2) -\ell \ve^2 r^{-2-\ve} \right) \mu\notag\\
&+ \ine r^{s-2-\alpha} e^{r^{-2\ve}} \ell (1-\ve r^{-\ve}) vZv\ O(1) 
\notag\\
& \geq  -  \int_{\mathcal A} r^{s} e^{r^{-2\ve}} Z(v^2) \left(\frac{\ell^2}{r^2} (1- \ve r^{-\ve})^2 - \frac{\ell}{r^2} (1-\ve r^{-\ve}) (n-2) -\ell \ve^2 r^{-2-\ve} \right) \mu\notag\\
& - C \ine r^{s-2-\alpha} \era (Zv)^2  - C\ell^2 \ine r^{s-2-\alpha} \era v^2,\notag
\end{align}
where we have used the Cauchy-Schwarz inequality to estimate
\[
\int_{\mathcal A} r^{s-2-\alpha} e^{r^{-2\ve}} \ell (1-\ve r^{-\ve}) vZv\ O(1) \ge - C \int_{\mathcal A} r^{s-2-\alpha} \era (Zv)^2  - C\ell^2 \int_{\mathcal A} r^{s-2-\alpha} \era v^2.
\]
Using the integration by parts formula
\begin{equation}\label{ibpfg}
\int_{\mathcal A} f Zg = \int_{\p \mathcal A} f g \sa Z,\nu\da - \int_{\mathcal A} f g \D Z - \int_{\mathcal A} g Z f
\end{equation}
in the first integral
in the right-hand side of \eqref{hj2}, and (iv) and (v) in Lemma \ref{dc1}, we obtain
\begin{align}\label{hj4}
& -  \int_{\mathcal A} r^{s} e^{r^{-2\ve}} Z(v^2) \left(\frac{\ell^2}{r^2} (1- \ve r^{-\ve})^2 - \frac{\ell}{r^2} (1-\ve r^{-\ve}) (n-2) -\ell \ve^2 r^{-2-\ve} \right) \mu\\
&=\ell^2(n+s-2) \ine r^{s-2} \era (1-\ve r^{-\ve})^2 v^2  \mu -2 \ell^2 \ve \ine r^{s-2-2\ve} \era  (1-\ve r^{-\ve})^2 v^2 \mu\notag\\
& + 2 \ell^2 \ve^2 \ine r^{s-2-\ve} \era  (1-\ve r^{-\ve}) v^2 \mu + \ell^2 \ine r^{s-2-\alpha} \era   (1-\ve r^{-\ve})^2 v^2 O(1)\notag\\
&- \ell^2 \ineb  r^{s-2} \era  (1-\ve r^{-\ve})^2  v^2 \langle Z, \nu \rangle \mu -\ell (n+s-2) (n-2) \ine r^{s-2}\era  (1-\ve r^{-\ve}) v^2 \mu  \notag \\
&+2\ell (n-2)\ve \ine r^{s-2-2\ve} \era  (1-\ve r^{-\ve}) v^2 \mu - \ell (n-2) \ve^2 \ine r^{s-2-\ve} \era v^2 \mu \notag \\
& + \ell (n-2)\ine r^{s-2-\alpha} \era    (1-\ve r^{-\ve})v^2 O(1) +\ell (n-2) \ineb r^{s-2}\era (1-\ve r^{-\ve}) v^2 \langle Z, \nu\rangle \mu \notag\\&-\ell \ve^2 (n+s-2-\ve) \ine r^{s-2-\ve} \era  v^2 \mu + 2 \ell \ve^3 \ine r^{s-2-3\ve} v^2 \mu+\ell \ve^2 \ine r^{s-2-2\ve -\alpha} \era v^2 O(1)  \notag\\
& +\ell \ve^2 \ineb r^{s-2-\ve} \era v^2 \langle Z, \nu \rangle \mu.\notag
\end{align}
The choice 
\begin{equation}\label{choice1} 
s=2-n, 
\end{equation}
eliminates the first and sixth integrals from the right-hand side 
of \eqref{hj4}.
After grouping  the boundary integrals according to like powers of $r$, we obtain 
\begin{align}\label{hj5}
&  -  \int_{\mathcal A} r^{s} e^{r^{-2\ve}} Z(v^2) \left(\frac{\ell^2}{r^2} (1- \ve r^{-\ve})^2 - \frac{\ell}{r^2} (1-\ve r^{-\ve}) (n-2) -\ell \ve^2 r^{-2-\ve} \right) \mu
\\
&= -2 \ell^2 \ve \ine r^{s-2-2\ve} \era v^2 (1-\ve r^{-\ve})^2 \mu + 2 \ell^2 \ve^2 \ine r^{s-2-\ve} \era v^2 (1-\ve r^{-\ve}) \mu
\notag\\
&+ \ell^2 \ine r^{s-2-\alpha} \era (1-\ve r^{-\ve})^2 v^2 O(1)  + 2\ell (n-2)\ve \ine r^{s-2-2\ve} \era v^2 (1-\ve r^{-\ve}) \mu
 \notag\\
&- \ell (n-2) \ve^2 \ine r^{s-2-\ve} \era v^2 \mu + \ell (n-2)\ine r^{s-2-\alpha} \era (1-\ve r^{-\ve}) v^2 O(1)  
\notag\\
 & +\ell \ve^3 \ine r^{s-2-\ve} \era  v^2 + 2 \ell \ve^3 \ine r^{s-2-3\ve} v^2\mu  +\ell \ve^2 \ine r^{s-2-2\ve -\alpha} \era  v^2 O(1)  \notag\\ 
&- \f(\f -n+2) \ineb  r^{s-2} \era v^2 \langle Z, \nu \rangle \mu    + \lt \ineb r^{s-2-\ve} \era  v^2 \langle Z, \nu \rangle \mu
\notag\\ 
& -\f^2 \ve^2 \ineb r^{s-2-2\ve} \era v^2  \langle Z, \nu \rangle \mu.\notag
\end{align}
Substituting \eqref{hj1}, \eqref{hj2}, \eqref{hj5} in \eqref{lasterm}, (keeping \eqref{choice1} in mind), we obtain 
\begin{align}\label{kjh1}
&2 \ine r^{s} \era  Zv \D(A\nabla v)\geq 4 \ell \int_{\mathcal A} r^{s+2} e^{r^{-2\ve}} (1-\ve r^{-\ve})  \left(\frac{Z v}{r}\right)^2 \mu + 2 \ine r^{s} \era v^2  
\\
&  - 2\ve \ine r^{s-2\ve}\era v^2  -2 \ell^2 \ve \ine r^{s-2-2\ve} \era  (1-\ve r^{-\ve})^2 v^2 \mu
\notag\\ 
& + 2 \ell^2 \ve^2 \ine r^{s-2-\ve} \era  (1-\ve r^{-\ve}) v^2 \mu
+2\ell (n-2)\ve \ine r^{s-2-2\ve} \era  (1-\ve r^{-\ve}) v^2 \mu 
\notag\\
& - \ell (n-2) \ve^2 \ine r^{s-2-\ve} \era v^2 \mu
+\ell \ve^3 \ine r^{s-2-\ve} \era  v^2 \mu\notag  + 2 \ell \ve^3 \ine r^{s-2-3\ve} v^2\mu 
\notag\\
& - C \ine r^{s-\alpha} \era v^2 - C \ell^2 \ine r^{s-2-\alpha} \era v^2 -  C \ine r^{s-2-\alpha} \era (Zv)^2
\notag\\
&- \ineb  r^{s} \era v^2 \langle Z, \nu \rangle   - \f(\f -n+2) \ineb  r^{s-2} \era v^2 \langle Z, \nu \rangle \mu\notag\\
& + \lt \ineb r^{s-2-\ve} \era  v^2 \langle Z, \nu \rangle \mu -\f^2 \ve^2 \ineb r^{s-2-2\ve} \era v^2  \langle Z, \nu \rangle \mu. 
\notag
\end{align}
Using \eqref{kjh1} in \eqref{k4} and collecting all   boundary integrals on one side, we find
\begin{align}\label{kjh0}
&  2 \ineb r^{s} \era \left(\frac{Zv}{r}\right)^2   \langle Z, \nu \rangle \mu - \ineb r^{s}\era  \langle A \nabla v, \nabla v\rangle \langle Z, \nu \rangle + \ineb  r^{s} \era v^2 \langle Z, \nu \rangle 
\\
& + \f(\f -n+2) \ineb  r^{s-2} \era v^2 \langle Z, \nu \rangle \mu  - \lt \ineb  r^{s-2-\ve} \era v^2 \langle Z, \nu \rangle \mu
\notag\\
&+ \f^2 \ve^2 \ineb r^{s-2-2\ve} \era v^2  \langle Z, \nu \rangle \mu 
\notag\\
&\geq  2\ve \ine r^{s-2\ve} \era \langle A \nabla v, \nabla v \rangle  -C \ine r^{s-\alpha} \era \langle A \nabla v, \nabla v\rangle 
\notag \\
&+  2s  \ine r^{s-2} \era (Zv)^2 \mu  - 4 \ve \ine r^{s-2-2\ve} \era (Zv)^2 \mu 
\notag\\
& + 4 \ell \ine r^{s-2} \era (Zv)^2 \mu -  4\f \ve \ine r^{s-2-\ve} \era (Zv)^2 \mu 
\notag \\
& + 2 \ine r^{s} \era v^2   - 2\ve \ine r^{s-2\ve} \era v^2 
\notag\\
& -2 \ell^2 \ve \ine r^{s-2-2\ve} \era (1-\ve r^{-\ve})^2 v^2  \mu + 2 \ell^2 \ve^2 \ine r^{s-2-\ve} \era  (1-\ve r^{-\ve}) v^2 \mu
\notag\\
&+2\ell (n-2)\ve \ine r^{s-2-2\ve} \era  (1-\ve r^{-\ve})v^2 \mu - \ell (n-2) \ve^2 \ine r^{s-2-\ve} \era v^2 \mu\notag\\
& +\ell \ve^3 \ine r^{s-2-\ve} \era  v^2 \mu\notag  + 2 \ell \ve^3 \ine r^{s-2-3\ve} v^2\mu - C \ine r^{s-\alpha} \era v^2 
\notag\\
& - C \ell^2 \ine r^{s-2-\alpha} \era v^2 -  C \ine r^{s-2-\alpha} \era (Zv)^2.
\notag
\end{align}
At this point, we add a suitable boundary integral to the left-hand side of \eqref{kjh0}. Such a term is instrumental in achieving, from the positivity of the functional $F$, the conditional positivity of the functional 
$G$ in Lemma \ref{post1}. Specifically, this boundary correction aligns the coercivity properties of 
$F$ with those required for 
$G$, ensuring that the resulting functional retains positivity.
Integrating by parts, and using (v) in Lemma \ref{dc1}, we find
 \begin{align}\label{k50}
&- \ineb r^{s-1} \era v^2  \langle Z, \nu \rangle \mu = -(n+s-1) \ine  r^{s-1} \era v^2 \mu  +2 \ve\ine r^{s-2\ve-1} \era v^2 \mu 
\\
& - \ine r^{s-1-\alpha} \era  v^2 O(1) - 2 \ine  r^{s-1} \era v Zv \mu 
\notag\\
& \geq -(n+s-1) \ine  r^{s-1} \era v^2 \mu  +2 \ve\ine r^{s-2\ve-1} v^2 \mu \era
\notag\\
& - C\ine r^{s-1-\alpha}\era v^2  - \ine r^{s-2} (Zv)^2 \mu \era -\ine r^{s} v^2 \mu\era \notag \\
& = - \ine r^{s-1} \era v^2  \mu  +2 \ve\ine r^{s-2\ve-1} \era v^2 \mu  - C\ine r^{s-1-\alpha}\era v^2 
\notag\\
& - \ine r^{s-2} \era (Zv)^2 \mu  -\ine r^{s} \era v^2 \mu, 
\notag
\end{align}
where in the last equality we have used our choice $s=2-n$, see \eqref{choice1}. Adding the left-hand side of \eqref{k50} to that of  \eqref{kjh0}, and similarly for the right-hand sides, the resulting inequality is
\begin{align}\label{jkh1}
&  2 \ineb r^{s} \era \left(\frac{Zv}{r}\right)^2 \langle Z, \nu \rangle \mu - \ineb  r^{s}\era \langle A \nabla v, \nabla v\rangle \langle Z, \nu \rangle + \ineb  r^{s}\era v^2 \langle Z, \nu \rangle \\
& + \f(\f -n+2) \ineb  r^{s-2} \era v^2 \langle Z, \nu \rangle \mu  - \lt \ineb  r^{s-2-\ve} \era \langle Z, \nu \rangle \mu
\notag \\
&+ \f^2 \ve^2 \ineb r^{s-2-2\ve} \era v^2  \langle Z, \nu \rangle \mu  -  \ineb  r^{s-1}\era v^2 \langle Z, \nu \rangle \mu 
\notag\\
&\geq  2\ve \ine r^{s-2\ve} \era \langle A \nabla v, \nabla v \rangle  -C \ine r^{s-\alpha} \era  \langle A \nabla v, \nabla v\rangle 
\notag  \\
&+  2s  \ine r^{s-2} \era (Zv)^2 \mu  - 4 \ve \ine r^{s-2-2\ve}\era (Zv)^2 \mu 
\notag\\
& +4 \ell \ine r^{s-2} \era (Zv)^2 \mu -  4\f \ve \ine r^{s-2-\ve} \era (Zv)^2 \mu
\notag \\
& + 2 \ine r^{s} \era v^2   - 2\ve \ine r^{s-2\ve}\era v^2 
\notag\\
& -2 \ell^2 \ve \ine r^{s-2-2\ve} \era (1-\ve r^{-\ve})^2 v^2  \mu + 2 \ell^2 \ve^2 \ine r^{s-2-\ve} \era  (1-\ve r^{-\ve}) v^2 \mu
\notag\\
&+2\ell (n-2)\ve \ine r^{s-2-2\ve} \era  (1-\ve r^{-\ve}) v^2 \mu - \ell (n-2) \ve^2 \ine r^{s-2-\ve} \era v^2 \mu
\notag\\
& +\ell \ve^3 \ine r^{s-2-\ve} \era  v^2 \mu
\notag  
+ 2 \ell \ve^3 \ine r^{s-2-3\ve} \era v^2\mu - C \ine r^{s-\alpha} \era v^2 \notag\\
& - C \ell^2 \ine r^{s-2-\alpha} \era v^2 -  C \ine r^{s-2-\alpha} \era (Zv)^2
\notag\\
&  - \ine r^{s-1} \era v^2  \mu  +2 \ve\ine r^{s-2\ve-1} \era v^2 \mu  - C\ine r^{s-1-\alpha}\era v^2 
\notag\\
& - \ine r^{s-2} \era (Zv)^2 \mu  -\ine r^{s}\era v^2 \mu. 
\notag
\end{align}
We now make the following

\medskip

\noindent \underline{Claim:} \emph{With $\alpha$ as in \eqref{st}, suppose that 
\[
2 \ve< \alpha.
\]
Then the right-hand side of \eqref{jkh1} can be made $\ge 0$ provided that $t$ in \eqref{om} and $\ell$ are chosen sufficiently large (depending on $\ve$)}.

\medskip

\noindent Once the claim is proved we can easily complete the proof of the theorem. Observe in fact that the claim implies 
\begin{align}\label{k70}
&  2 \ineb r^{s} \era \left(\frac{Zv}{r}\right)^2  \langle Z, \nu \rangle \mu  - \ineb  r^{s}\era \langle A \nabla v, \nabla v\rangle \langle Z, \nu\rangle  + \ineb  r^{s}\era v^2 \langle Z, \nu \rangle \mu 
\\
& + \f(\f -n+2) \ineb  r^{s-2} \era v^2 \langle Z, \nu \rangle \mu  - \lt \ineb  r^{s-2-\ve} \era v^2 \langle Z, \nu \rangle\mu
\notag \\
&+ \f^2 \ve^2 \ineb r^{s-2-2\ve} \era v^2  \langle Z, \nu \rangle \mu  -  \ineb  r^{s-1} \era v^2 \langle Z, \nu \rangle \mu\ \geq\ 0,
\notag
\end{align}
where $\mathcal A$ is the annulus in \eqref{om}. Noting that, in view of (i) in Lemma \ref{dc1}, we have on $\p \mathcal A$:
\[
\langle Z, \nu \rangle = \sa Z,\nabla r\da = \tau,\ \text{when}\ |x|=\tau,
\ \text{and}\ \langle Z, \nu \rangle =-t,\ \text{ when}\ |x|=t,
\]
and keeping \eqref{choice1} in mind, combining \eqref{k70} with the definition of $F$ in \eqref{g1} we reach the desired conclusion that
\[
\tau^{1-n}F(\ell, \tau) \geq t^{1-n} F(\ell, t). 
\] 
We are thus left with proving the \underline{Claim}. First, once $\ve>0$ has been fixed such that $2\ve< \alpha$, it is clear that 
\begin{align*}
& C \ine r^{s-\alpha} \era \langle A \nabla v, \nabla v\rangle \le C \ine r^{2\ve-\alpha}r^{s-2\ve} \era \langle A \nabla v, \nabla v\rangle
\\
& \le C t^{2\ve-\alpha}\ine r^{s-2\ve} \era \langle A \nabla v, \nabla v\rangle \le 2\ve \ine r^{s-2\ve} \era \langle A \nabla v, \nabla v\rangle,
\end{align*}
provided that $C t^{2\ve-\alpha}\le 2\ve$. This ensures that for $t> t(\ve)>>1$, we have
\begin{align}\label{ine1}
2\ve \ine r^{s-2\ve} \era \langle A \nabla v, \nabla v \rangle  -C \ine r^{s-\alpha} \era \langle A \nabla v, \nabla v\rangle\geq 0.
\end{align}
Next, we estimate from below the sum of those terms in the right-hand side of \eqref{jkh1} containing $(Zv)^2$. Arguing similarly to \eqref{ine1}, one easily sees that for all $\f$ and $t$ large one has
\begin{align}\label{ineq2}
&  2s  \ine r^{s-2}\era (Zv)^2 \mu  - 4 \ve \ine r^{s-2-2\ve}\era (Zv)^2 \mu 
 + 4 \ell \ine r^{s-2} \era (Zv)^2 \mu
 \\
 & -  4\f \ve \ine r^{s-2-\ve} \era (Zv)^2 \mu 
 - C \ine r^{s-2-\alpha} \era (Zv)^2  - \ine r^{s-2} \era (Zv)^2 \mu\  \geq\ 0.
\notag
\end{align}
We next claim that
\begin{align}\label{hog1}
&-2 \ell^2 \ve \ine r^{s-2-2\ve} \era v^2 (1-\ve r^{-\ve})^2 \mu + 2 \ell^2 \ve^2 \ine r^{s-2-\ve} \era v^2 (1-\ve r^{-\ve}) \mu\\
&+2\ell (n-2)\ve \ine r^{s-2-2\ve} \era v^2 (1-\ve r^{-\ve}) \mu - \ell (n-2) \ve^2 \ine r^{s-2-\ve} \era v^2 \mu\notag\\
& +\ell \ve^3 \ine r^{s-2-\ve} \era  v^2 \mu\notag  + 2 \ell \ve^3 \ine r^{s-2-3\ve} \era v^2\mu  
 - C \ell^2 \ine r^{s-2-\alpha} \era v^2\ \geq\ 0,
\notag
\end{align}
provided that $t$ and $\ell$ are chosen sufficiently large.
The crucial positivity of this sum derives from the combination of two facts:
\begin{itemize}
\item[(1)] the observation that, thanks to the fact that $\ve<2\ve<\alpha$, the leading integral is the second one in the left-hand side, i.e., $\ine r^{s-2-\ve} \era v^2 (1-\ve r^{-\ve}) \mu$;
\item[(2)] the fact that the coefficient in front of such integral, $2 \ell^2 \ve^2$, is strictly positive and quadratic in $\ell$.
This allows, for large $\ell$, to dominate the remaining integrals in the sum.
\end{itemize}
Finally, we assert that for all $t$ sufficiently large, we have
\begin{align}\label{ine5}
&2\ine r^{s} \era v^2  -C  \ine r^{s-\alpha} \era v^2 - 2\ve \ine r^{s-2\ve} \era v^2  -  \ine r^{s-1} \era v^2  \mu
\\
&   + 2 \ve\ine r^{s-2\ve-1} \era v^2 \mu  -C \ine r^{s-1-\alpha} \era v^2
-\ine r^{s}\era v^2 \mu\ \geq\ 0.
\notag
\end{align}
This follows by observing that for large $t$, one has by (ii) in Lemma \ref{dc1}
\[
2\ine r^{s} \era v^2 -\ine r^{s}\era v^2 \mu = \ine r^{s}\era v^2 \mu + \ine r^{s-\alpha}\era v^2  O(1).
\]
Since for large $t$ the integral in the right-hand side dominates all remaining integrals in the left-hand side of \eqref{ine5}, the conclusion follows. 
The combination of \eqref{ine1}-\eqref{ine5} establishes the claim, thus completing the proof of the theorem.

}
\end{proof}

We finally turn to the 

 \begin{proof}[Proof of Theorem \ref{mong2}]

Keeping in mind the Definition \ref{D:G} of the functional $G$, with $u$ as in \eqref{mn}, we let  
$w= r^{\ell_1} u$, with $\ell_1=\frac{n-1}{2}$. For ease of notation we henceforth write $u= k(r) w$ with $k(r)= r^{-\ell_1}$. To compute the equation satisfied by $w$ we use \eqref{eqv}, which presently gives
\begin{equation}\label{divw}
\D(A\nabla w)= -w \left(1+  \frac{\D(A\nabla k(r))}{k(r)}\right) - 2 \frac{k'(r)}{rk(r)}  Z w\ \mu.
\end{equation}
Since
\[
\frac{k'(r)}{rk(r)} = - \frac{\ell_1}{r^2} = - \frac{n-1}{2r^2},\ \ \frac{k''(r)}{k(r)} - \frac{k'(r)}{r k(r)} = \frac{\ell_1(\ell_1+2)}{r^2} = \frac{(n-1)(n+1)}{r^2},
\]
we obtain from \eqref{eqk}
\[
\frac{\D(A\nabla k(r))}{k(r)} = \frac{\ell_1(\ell_1+2-n)}{r^2}\mu + \frac{1}{r^{2+\alpha}} O(1)  = -\frac{(n-1)(n-3)}{r^2}\mu + \frac{1}{r^{2+\alpha}} O(1).
\]
Substitution in \eqref{divw} gives
\begin{align}\label{com2}
& \D(A\nabla w)=-w + \frac{(n-1)(n-3)}{4 r^2} w\ \mu   + \frac{n-1}{r^{2}} Zw\ \mu + \frac{w}{r^{2+\alpha}} O(1).
\end{align}
We now multiply both sides of \eqref{com2} by $2 r^{-n+\delta}   Zw$ and integrate over $\mathcal A$, an annular region as in \eqref{om}, obtaining
\begin{align}\label{cot1}
& 2 \ine r^{-n+\delta}   Zw \D(A \nabla w)= -  \ine r^{-n+\delta}  Z (w^2) +  \frac{(n-1)(n-3)}{4} \ine r^{-n-2+\delta} Z(w^2) \mu  
\\
&  + 2(n-1) \ine r^{-n-2+\delta} (Zw)^2  \mu 
+ \ine O(1)  r^{-n-2-\alpha+\delta}   wZw.
\notag \\
& \geq  -  \ine r^{-n+\delta}  Z (w^2) +  \frac{(n-1)(n-3)}{4} \ine r^{-n-2+\delta} Z(w^2) \mu
\notag\\
&  + 2(n-1) \ine r^{-n-2+\delta} (Zw)^2  \mu -C \ine r^{-n-2-\alpha+\delta} (Zw)^2 - C \ine r^{-n-2-\alpha+\delta} w^2, 
\notag
\end{align}
where in the last inequality we have used the trivial estimate $2 |wZw|\le w^2+(Zw)^2$. We next use the integration by parts formula \eqref{ibpfg} in the first and second integrals in the right-hand side of \eqref{cot1}, along with (i), (iv) and (v) in Lemma \ref{dc1}, obtaining
\begin{align*}
& -  \ine r^{-n+\delta}  Z (w^2) = \int_{\mathcal A} w^2 Z(r^{-n+\delta}) + \int_{\mathcal A} r^{-n+\delta} w^2 \D Z - \int_{\p \mathcal A} r^{-n+\delta} w^2 \sa Z,\nu\da
\\
& = (-n+\delta) \int_{\mathcal A} r^{-n+\delta} w^2 + n \int_{\mathcal A} r^{-n+\delta} w^2 + \int_{\mathcal A} O(1) r^{-n-\alpha +\delta} w^2 - \int_{\p \mathcal A} r^{-n+\delta} w^2 \sa Z,\nu\da
\\
& = \delta \int_{\mathcal A} r^{-n+\delta} w^2+ \int_{\mathcal A} O(1) r^{-n-\alpha +\delta} w^2 - \int_{\p \mathcal A} r^{-n+\delta} w^2 \sa Z,\nu\da.
\end{align*}
Similarly, we find
\begin{align*}
& \frac{(n-1)(n-3)}{4} \ine r^{-n-2+\delta} Z(w^2) \mu = \frac{(n-1)(n-3)}{4}\int_{\p \mathcal A} r^{-n-2+\delta} w^2 \sa Z,\nu\da \mu
\\
& - \frac{(n-1)(n-3)}{4} \ine r^{-n-2+\delta} w^2 \D Z\ \mu - \frac{(n-1)(n-3)}{4} \ine w^2 Z(\mu r^{-n-2+\delta})
\\
& = \frac{(n-1)(n-3)}{4}\int_{\p \mathcal A} r^{-n-2+\delta} w^2 \sa Z,\nu\da \mu -  \frac{n(n-1)(n-3)}{4} \ine r^{-n-2+\delta} w^2 \ \mu
\\
& + \int_{\mathcal A} O(1) r^{-n-2-\alpha +\delta} w^2 \mu + \frac{(n+2-\delta)(n-1)(n-3)}{4} \ine r^{-n-2+\delta} w^2  \mu
\\
& +  \int_{\mathcal A} O(1) r^{-n-2-\alpha +\delta} w^2
\\
& = \frac{(n-1)(n-3)}{4}\int_{\p \mathcal A} r^{-n-2+\delta} w^2 \sa Z,\nu\da \mu +  \frac{(2-\delta)(n-1)(n-3)}{4} \ine r^{-n-2+\delta} w^2 \ \mu
\\
& + \int_{\mathcal A} O(1) r^{-n-2-\alpha +\delta} w^2,  
\end{align*}
where in the last equality we have used \eqref{ellmu}.
Substituting the latter two equations in \eqref{cot1}, after rearranging terms, we obtain 
\begin{align}\label{cot22}
& 2 \ine r^{-n+\delta}   Zw \D(A \nabla w)  \ge  2(n-1) \ine r^{-n-2+\delta} (Zw)^2  \mu -C \ine r^{-n-2-\alpha+\delta} (Zw)^2 
\\
& + \frac{(n-1)(n-3)}{4}\int_{\p \mathcal A} r^{-n-2+\delta} w^2 \sa Z,\nu\da \mu - \int_{\p \mathcal A} r^{-n+\delta} w^2 \sa Z,\nu\da
\notag\\
& + \delta \int_{\mathcal A} r^{-n+\delta} w^2+ \int_{\mathcal A} O(1) r^{-n-\alpha +\delta} w^2 
+  \frac{(2-\delta)(n-1)(n-3)}{4} \ine r^{-n-2+\delta} w^2 \ \mu
\notag\\
& + \int_{\mathcal A} O(1) r^{-n-2-\alpha +\delta} w^2   - C \ine r^{-n-2-\alpha+\delta} w^2.  
\notag
\end{align}
At this point we observe that for a fixed $\delta>0$, provided that $t$ in \eqref{om} be sufficiently large, we have
\begin{align*}
& \delta \int_{\mathcal A} r^{-n+\delta} w^2+ \int_{\mathcal A} O(1) r^{-n-\alpha +\delta} w^2 
+  \frac{(2-\delta)(n-1)(n-3)}{4} \ine r^{-n-2+\delta} w^2 \ \mu
\notag\\
& + \int_{\mathcal A} O(1) r^{-n-2-\alpha +\delta} w^2   - C \ine r^{-n-2-\alpha+\delta} w^2\ \ge\ 0.  
\end{align*}
Using this information in \eqref{cot22}, we infer
\begin{align}\label{cot2}
& 2 \ine r^{-n+\delta}   Zw \D(A \nabla w)  \ge  2(n-1) \ine r^{-n-2+\delta} (Zw)^2  \mu -C \ine r^{-n-2-\alpha+\delta} (Zw)^2 
\\
& + \frac{(n-1)(n-3)}{4}\int_{\p \mathcal A} r^{-n-2+\delta} w^2 \sa Z,\nu\da \mu - \int_{\p \mathcal A} r^{-n+\delta} w^2 \sa Z,\nu\da.
\notag
\end{align}
To handle the left-hand side in \eqref{cot2}, we next apply the Rellich-Payne-Weinberger identity in Proposition \ref{P:PW}, with the vector field $X= r^{-n+\delta} Z$, in combination with Lemma \ref{dc1}. We obtain
\begin{align}\label{cot4}
& 2 \ine r^{-n+\delta}  Zw \D(A \nabla w)= 2 \ineb r^{-n-2+\delta} (Zw)^2  \langle Z, \nu \rangle \mu - \ineb r^{-n+\delta}  \langle A \nabla w, \nabla w \rangle \langle Z, \nu \rangle\\    
      & +2(n-\delta) \ine r^{-n-2+\delta} (Zw)^2 \mu  +(\delta-2) \ine r^{-n+\delta} \langle A \nabla w, \nabla w \rangle + \ine O(1) r^{-n+\delta-\alpha}  \langle A \nabla w, \nabla w\rangle.
\notag 
\end{align}
Substituting \eqref{cot4} into \eqref{cot2}, and grouping in the left-hand side the boundary integrals, we conclude that for all $t$ large
\begin{align}\label{cot5}
& 2 \ineb r^{-n-2+\delta} (Zw)^2  \langle Z, \nu \rangle \mu - \ineb r^{-n+\delta}  \langle A \nabla w, \nabla w \rangle \langle Z, \nu \rangle
\\
& + \ineb r^{-n+\delta} w^2 \langle Z, \nu \rangle -\frac{(n-1)(n-3)}{4} \ineb r^{-n-2+\delta} w^2 \langle Z, \nu \rangle    \mu      \notag\\
             & \geq (-2+2\delta)\ine r^{-n-2+\delta} (Zw)^2 \mu + (2-\delta) \ine r^{-n+\delta} \langle A \nabla w, \nabla w\rangle \notag\\
             &  -C \ine r^{-n-2 -\alpha +\delta} (Zw)^2 \mu - C \ine r^{-n-\alpha +\delta } \langle A \nabla w, \nabla w\rangle.
\notag\end{align}      
Next, we note that, with $\delta>0$ fixed, for sufficiently large $t$ we can achieve
\begin{align}\label{lo1}
&   -C \ine r^{-n-2 -\alpha +\delta} (Zw)^2 \mu - C \ine r^{-n-\alpha +\delta } \langle A \nabla w, \nabla w\rangle \\
        &\geq - \frac{\delta}{2}    \left(\ine r^{-n-2 +\delta} (Zw)^2 \mu  + \ine r^{-n +\delta } \langle A \nabla w, \nabla w \rangle\right).\notag
\end{align}
Using \eqref{lo1} in \eqref{cot5}, we finally obtain for large $t$
\begin{align}\label{cot10}
  & 2 \ineb r^{-n-2+\delta} (Zw)^2 \mu \langle Z, \nu \rangle - \ineb \langle A \nabla w, \nabla w \rangle r^{-n+\delta}  \langle Z, \nu \rangle\\
              & + \ineb r^{-n+\delta} w^2 \langle Z, \nu \rangle -\frac{(n-1)(n-3)}{4} \ineb r^{-n-2+\delta} w^2 \langle Z, \nu \rangle  \mu        \notag\\             & \geq (-2+3\delta/2) \ine r^{-n-2+\delta} (Zw)^2 \mu  +(2-3\delta/2) \ine r^{-n+\delta} \langle A \nabla w, \nabla w\rangle\notag \\
& \geq 0,\notag
\end{align}
where in the last inequality we have critically used the inequality \eqref{rad}, which presently gives $$\left(\frac{Zw}{r}\right)^2 \mu \leq \langle A \nabla w, \nabla w\rangle.$$ 
Keeping in mind the definition of $G$ in \eqref{g2}, from the fundamental estimate \eqref{cot10} we finally reach the conclusion of Theorem \ref{mong2}.

\end{proof}             
             

\section{Appendix: Auxiliary material}\label{np}

In this appendix we have collected some material that has been repeatedly used in the main body of the paper. For a given $n\times n$ matrix $M$, we denote by $m_{ij}$ its entries. We routinely identify a vector field $X= (X_1,...,X_n)$ with the first-order differential operator $Xu = \sum_{i=1}^n X_i \frac{\p u}{\p x_i} = \sa X,\nabla u\da$. Consider the open set $\Om = \Rn\setminus \overline B = \{x\in \Rn \mid |x|>1\}$. 
For $x\in \Rn$, we let $r(x)=|x|$. When it is clear from the context that $r$ refers to $|x|$, and not a fixed positive number $r>0$, we simply write $r$ in place of $r(x)$. We record the well-known formula
\begin{equation}\label{lapr}
\Delta r = \frac{n-1}r,\ \ \ \ \ \ \ x\in \Rn\setminus\{0\}.
\end{equation}
As a consequence of \eqref{lapr} and \eqref{st}, we obtain
\begin{equation}\label{Lr}
\D(A(x) \nabla r) = \Delta r + \D(B(x) \nabla r) = \frac{n-1}r\left(1+O(\frac{1}{r^\alpha})\right),\ \ \ \ \ \ \ \ x\in \Om.
\end{equation}
We next introduce the conformal factor $\mu:\Rn\setminus \{0\}\to (0,\infty)$ by the equation
\begin{equation}\label{mu}
\mu(x) = \langle A(x)\nabla r(x), \nabla r(x)\rangle = \frac{\sa A(x)x,x\da}{|x|^2}.
\end{equation}
We clearly have from \eqref{ell}
\begin{equation}\label{ellmu}
\la \le \mu(x) \le \la^{-1}.
\end{equation}
In flat $\Rn$, the radial vector field $X(x) = x$ satisfies $\operatorname{div} X \equiv n$, a property that  plays a crucial role in the analysis of the Laplacian. In the present paper, we work with the following vector field $Z:\Rn \setminus\{0\}\to \Rn$ 
\begin{equation}\label{F}
Z(x) = r(x) \frac{A(x) \nabla r(x)}{\mu(x)}.
\end{equation}
 We note explicitly that, when $A(x) \equiv I_n$, then $Z(x) = x$. We also observe that, given a function $f$, the Cauchy-Schwarz inequality and the definition \eqref{mu} imply the following
estimate 
\begin{equation}\label{rad}
\left(\frac{Zf}{r}\right)^2 \mu \le \sa A(x)\nabla f,\nabla f\da.
\end{equation}
The validity of \eqref{rad} is an easy consequence of the Cauchy-Schwarz inequality
 \begin{align*}
\left( \frac{Zf}{r}\right)^2 \mu & = \frac{ \langle A \nabla f, \nabla r \rangle^2}{\mu}= \frac{ \langle A^{1/2} \nabla f, A^{1/2} \nabla r\rangle^2}{\mu} \leq \frac{\langle A^{1/2} \nabla f, A^{1/2} \nabla f \rangle \langle A^{1/2} \nabla r, A^{1/2} \nabla r \rangle}{\mu}
\\
& = \frac{\langle A\nabla f, \nabla f\rangle \langle A \nabla r, \nabla r\rangle}{\mu} = \langle A \nabla f ,\nabla f\rangle.
\end{align*} 
Furthermore, when $f(x) = h(|x|)$, then \eqref{F} gives
\begin{equation}\label{Zf}
A \nabla f = \frac{h'(r)}{r} \mu Z,
\end{equation}
and therefore if $u$ is another function, we find
\begin{equation}\label{Zuf}
\sa A\nabla u,\nabla f\da = \frac{h'(r)}{r} \mu Zu.
\end{equation} 
The vector field \eqref{F} has several remarkable properties. In the following lemma we collect those that will be important to us. In the sequel, we use the notation $D_i = D_{x_i}, D_{ik} = D_{x_i x_k}$, etc.

\begin{lemma}\label{dc1}
One has in $\Rn\setminus\{0\}$
\begin{itemize}
\item[(i)] $Zr= r$,
\item[(ii)] $\mu= 1+ O(1/r^{\alpha})$,
\item[(iii)] $|\nabla \mu|= O(1/r^{1+\alpha})$,
\item[(iv)] $|Z\mu| = O(1/r^{\alpha})$,
\item[(v)] $\D Z= n+O(1/r^{\alpha})$,
\item[(vi)] $\left|[D_i,  Z]u -D_i u \right| \leq \frac{C_2}{r^{\alpha}} |\nabla u|,\ \ \ \forall i \in \{1, ...,n\}$.
\end{itemize}
\end{lemma}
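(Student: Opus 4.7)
The plan is to prove the six items essentially in order, exploiting the decomposition $A = I + B$ where, by \eqref{st}, $|B|$ and $|x||\nabla B|$ are $O(r^{-\alpha})$. Every computation will produce an ``identity'' piece giving the main term and a ``$B$'' piece giving the error. Items (i) and (ii) are immediate: for (i), the definition \eqref{F} gives $Zr = \sa Z,\nabla r\da = \frac{r}{\mu}\sa A\nabla r,\nabla r\da = r$ by definition of $\mu$; for (ii), since $|\nabla r|=1$, we have $\mu = |\nabla r|^2 + \sa B\nabla r,\nabla r\da = 1 + O(|B|) = 1 + O(r^{-\alpha})$.

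For (iii), I would differentiate $\mu = 1 + \sum_{ij} b_{ij} D_i r D_j r$. This produces terms of the form $(D_k b_{ij}) D_i r D_j r$, bounded by $|\nabla B| = O(r^{-1-\alpha})$, and terms $b_{ij} D_{ik}r D_j r$, bounded by $|B||\nabla^2 r| = O(r^{-\alpha}) \cdot O(r^{-1}) = O(r^{-1-\alpha})$. For (iv), note that uniform ellipticity and $|A\nabla r|\le \lambda^{-1}$ give $|Z| \le Cr$, so $|Z\mu| \le |Z||\nabla \mu| = O(r)\cdot O(r^{-1-\alpha}) = O(r^{-\alpha})$. For (v), the cleanest route is to write $Z = \tfrac{r}{\mu} A\nabla r$ and apply the product rule:
\[
\D Z = \sa \nabla(r/\mu), A\nabla r \da + \tfrac{r}{\mu}\, \D(A\nabla r).
\]
The first term equals $\tfrac{1}{\mu}\sa\nabla r,A\nabla r\da - \tfrac{r}{\mu^2}\sa \nabla\mu,A\nabla r\da = 1 - \tfrac{Z\mu}{\mu}$, which is $1+O(r^{-\alpha})$ by (iv) and (ii); the second term equals $\tfrac{r}{\mu}\cdot\tfrac{n-1}{r}(1+O(r^{-\alpha})) = (n-1) + O(r^{-\alpha})$ by \eqref{Lr} and (ii). Adding, $\D Z = n + O(r^{-\alpha})$.

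Finally, for (vi), the commutator calculation
\[
[D_i,Z]u = D_i(\sa Z,\nabla u\da) - \sa Z,\nabla D_i u\da = \sum_k (D_i Z_k)\, D_k u,
\]
where the second-order terms cancel by equality of mixed partials. Hence it suffices to show $|D_i Z_k - \delta_{ik}| = O(r^{-\alpha})$. Writing $Z_k = \tfrac{1}{\mu}(x_k + \sum_j b_{kj}x_j)$ and differentiating yields four types of terms, namely $\delta_{ik}(\tfrac{1}{\mu}-1)$, $-\tfrac{x_k}{\mu^2}D_i\mu$, $\tfrac{1}{\mu}((D_i b_{kj})x_j + b_{ki})$, and $-\tfrac{1}{\mu^2}b_{kj}x_j D_i\mu$, each of which is $O(r^{-\alpha})$ by (ii), (iii), and \eqref{st}. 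Summing in $k$ and using Cauchy--Schwarz gives the claimed bound $|[D_i,Z]u - D_i u| \le C r^{-\alpha}|\nabla u|$.

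No single step is delicate, but the main bookkeeping obstacle is (v): a direct coordinate expansion of $\D Z$ is messy and, without invoking \eqref{Lr} and (iv) to compress the computation, one easily loses track of which terms are $O(r^{-\alpha})$ versus genuine contributions to the constant $n$. Packaging the argument through the product rule as above makes the cancellation transparent and reduces the whole lemma to a clean sequence of applications of \eqref{st}, \eqref{ell}, \eqref{ellmu}, and \eqref{Lr}.
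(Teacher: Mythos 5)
Your proof is correct and follows essentially the same route as the paper: decompose $A = I_n + B$, use the decay hypothesis \eqref{st}, and compute each item directly, with (v) handled by the product rule together with \eqref{Lr} and (iv). The only cosmetic differences are that the paper derives (iv) from \eqref{rad} rather than from $|Z|\le Cr$, and organizes (vi) through the decomposition $Z = x + \omega x + \omega Bx$ with $\omega = \mu^{-1}(1-\mu)$ instead of your direct computation of $D_iZ_k-\delta_{ik}$; both yield the same estimates.
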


\begin{proof}
(i) We have 
\begin{align}
& Zr =r \frac{\langle A\nabla r, \nabla r\rangle}{\mu} =r \frac{\langle A\nabla r, \nabla r\rangle}{\langle A \nabla r, \nabla r\rangle}  = r.
\end{align}

\medskip

(ii) Keeping \eqref{B} in mind, we have 
\begin{align}
\mu= \langle \nabla r, \nabla r\rangle  + \langle B \nabla r, \nabla r\rangle = 1+ O(1/r^\alpha), 
\end{align}
where in the last equality, we have used the decay assumption \eqref{st}. 

\medskip

(iii) Using summation convention, we have 
\[
D_k \mu = D_k(a_{ij} D_i r D_j r) = D_k(a_{ij}) D_i r D_j r + 2 a_{ij} D_{ik} r D_j r. 
\]
Since \eqref{st} gives $D_k(a_{ij}) = O(r^{-(1+\alpha)})$ and since $D_{ik} r = O(r^{-1})$, the conclusion follows.

\medskip

(iv)  Applying \eqref{rad} with $f = \mu$, we find
\[
\left(\frac{Z\mu}{r}\right)^2 \mu \le \sa A(x)\nabla \mu,\nabla \mu\da\le \la^{-1} |\nabla \mu|^2,
\]
where in the second inequality we have used \eqref{ell}. The desired conclusion now follows from (iii) and \eqref{ellmu}.

\medskip

(v) We have from \eqref{F} and \eqref{Lr}
\begin{align*}
 \D Z & = r \mu^{-1} \D(A(x)\nabla r) + 1 - r \mu^{-2} \sa A(x)\nabla r,\nabla\mu\da
\\
& = (n-1) \left(1 + \frac{1-\mu}{\mu}\right) \left(1+O(\frac{1}{r^\alpha})\right)+ 1 - r \mu^{-2} \sa A(x)\nabla r,\nabla\mu\da.
\end{align*}
From (ii) we have $1 + \frac{1-\mu}{\mu} = 1 + O(r^{-\alpha})$, and therefore
\[
\left|r \mu^{-2} \sa A(x)\nabla r,\nabla\mu\da\right|\le C(\la) |\nabla r| |\nabla \mu| = O\left(\frac{1}{r^{1+\alpha}}\right),
\]
where we have used \eqref{ellmu} and (iii). Substituting in the above expression of $\D Z$, we obtain (v).

\medskip

(vi) Using \eqref{B}, with $\omega(x) = \mu(x)^{-1}(1-\mu(x)) = O(r^{-\alpha})$, we write
\[
Z = \mu^{-1} (x + B x) = (1 + \omega(x))(x + B x) = x + \omega(x) x + \omega(x) B x.
\]
Keeping in mind that, with $Y(x) = x$, we have
\[
[D_i,Y] = D_i,
\]
we easily find
\[
[D_i,Z]u - D_i u = \omega D_i u + D_i \omega Yu + \omega [D_i,BY]u + D_i \omega BYu.
\]
Since by \eqref{ellmu} and (iii) we have
\[
D_i \omega = O(r^{-\alpha}),
\]
the desired conclusion follows.

\end{proof}
We note explicitly that (i) of Lemma gives for any $\gamma\in \R\setminus\{0\}$
\begin{equation}\label{Zrad}
Z(r^\gamma) = \gamma\ r^{\gamma}.
\end{equation}

The following lemma plays a crucial role in this paper.

\begin{lemma}\label{L:rad}
Let $u$ and $v$ be two functions satisfying $v(x) = h(|x|) u(x)$.
Then we have
\begin{equation}\label{rad1}
\left(\left(\frac{Zv}r\right)^2 \mu - \sa A(x)\nabla v,\nabla v\da\right) = h(r)^2\left(\left(\frac{Zu}r\right)^2 \mu - \sa A(x)\nabla u,\nabla u\da\right).
\end{equation}
In particular, when $v(x)=h(|x|)$, we obtain from \eqref{rad1}
\begin{equation}\label{rad2}
\left(\frac{Zv}{r}\right)^2 \mu = \sa A(x)\nabla v,\nabla v\da.
\end{equation}
\end{lemma}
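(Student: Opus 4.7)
The identity in Lemma \ref{L:rad} is an algebraic identity that I would prove by direct expansion, exploiting the fact that the cross-terms produced by the product rule cancel once one rewrites $\langle A\nabla u,\nabla r\rangle$ in terms of $Zu$.

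The plan is to begin by computing the gradient and the $Z$-derivative of $v = h(r)u$. From the product rule, $\nabla v = h(r)\nabla u + h'(r) u \nabla r$, and since Lemma \ref{dc1}(i) gives $Zr = r$, we obtain $Zv = h(r)\, Zu + h'(r)\, u\, r$, hence
\[
\frac{Zv}{r} \;=\; h(r)\,\frac{Zu}{r} \,+\, h'(r)\, u .
\]
Squaring and multiplying by $\mu$ yields three terms: $h^2 \mu (Zu/r)^2$, a cross-term $2 h h' u \mu (Zu/r)$, and $\mu (h')^2 u^2$.

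Next I would expand $\langle A\nabla v,\nabla v\rangle$ bilinearly, obtaining $h^2\langle A\nabla u,\nabla u\rangle + 2 h h' u \langle A\nabla u,\nabla r\rangle + (h')^2 u^2 \langle A\nabla r,\nabla r\rangle$. The third term here equals $\mu(h')^2 u^2$ by the very definition \eqref{mu} of $\mu$, so it cancels against the third term of $\mu(Zv/r)^2$. The crux of the proof is identifying the cross-term: from the definition \eqref{F} of $Z$, one has $Zu = r\,\langle A\nabla r,\nabla u\rangle/\mu$, equivalently $\langle A\nabla u,\nabla r\rangle = \mu Zu/r$, so the cross-term appearing in $\langle A\nabla v,\nabla v\rangle$ is exactly $2 h h' u \mu (Zu/r)$, which matches the cross-term from $\mu(Zv/r)^2$. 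Subtracting then leaves precisely $h^2\big[\mu (Zu/r)^2 - \langle A\nabla u,\nabla u\rangle\big]$, which is \eqref{rad1}.

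For \eqref{rad2}, I would simply specialize \eqref{rad1} to $u\equiv 1$: then $\nabla u\equiv 0$ and $Zu\equiv 0$, so the right-hand side of \eqref{rad1} vanishes, giving the asserted equality. There is no real obstacle here; the only point requiring care is the bookkeeping of the cross-terms, which work out because the identity $\langle A\nabla u,\nabla r\rangle = \mu Zu/r$ is built into the definition of $Z$. In fact, this is exactly the reason $Z$ was introduced with the conformal factor $\mu$ in the denominator, and \eqref{rad2} can be seen as the statement that the inequality \eqref{rad} is saturated precisely on radial functions.
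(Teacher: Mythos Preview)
Your proposal is correct and follows essentially the same approach as the paper: both compute $Zv/r$ via $Zr=r$, expand $\mu(Zv/r)^2$ and $\langle A\nabla v,\nabla v\rangle$ bilinearly, identify the cross-terms using the definition of $Z$ (equivalently \eqref{Zuf}), subtract, and then specialize to $u\equiv 1$ for \eqref{rad2}. Your closing remark that \eqref{rad2} is the saturation of \eqref{rad} on radial functions is a nice observation not made explicit in the paper.
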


\begin{proof}
We have from (i) in Lemma \ref{dc1}
\[
\frac{Z v}{r} = h'(r)u + \frac{h(r)}r Z u.
\] 
This gives
\begin{align*}
\left(\frac{Z v}{r}\right)^2 \mu & = \left(h'(r)u + \frac{h(r)}r Z u\right)^2 \mu = h'(r)^2 u^2 \mu+ h(r)^2\left(\frac{Z u}r\right)^2 \mu + 2 h(r) h'(r) u \frac{Zu}r \mu 
\end{align*}

On the other hand
\begin{align*}
\sa A\nabla v,\nabla v\da & = h'(r)^2 u^2 \mu + 2 h(r) h'(r) u \sa A\nabla u,\nabla r\da + h(r)^2 \sa A\nabla u,\nabla u\da
\\
& = h'(r)^2 u^2 \mu + 2 h(r) h'(r) u \frac{Zu}r \mu + h(r)^2 \sa A\nabla u,\nabla u\da,
\end{align*}
where in the second equality we have used \eqref{F}. The identity \eqref{rad1} now follows by subtracting the latter two identities. Finally, by taking $u\equiv 1$ in \eqref{rad1}, we immediately obtain the identity \eqref{rad2}.

\end{proof}

We need the following result from \cite{PW}, in the version given in \cite[Lemma 2.11]{GV}.

\begin{prop}[Identity of Rellich-Payne-Weinberger]\label{P:PW}
Let $U\subset \Rn$ be a piecewise $C^1$, bounded open set,   
and let $X = (X_1,...,X_n) \in C^{0,1}(\overline U,\Rn)$ be a Lipschitz vector field. If $f\in C^{1,1}(\overline U)$, then
\begin{align*}
& 2 \int_U Xf \di(A\nabla f) dx =
 2 \int_{\p U} Xf \sa A\nabla f,\nu\da d\sigma - \int_{\p U} \sa A\nabla f,\nabla f\da \sa X,\nu \da d\sigma 
\\
& + \int_{U} \di X \sa A\nabla f,\nabla f\da dx 
 - 2 \int_U a_{ij} [D_i, X] f D_j f dx + \int_U Xa_{ij} D_i f D_j f dx.
\end{align*}
\end{prop}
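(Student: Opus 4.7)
The plan is to prove the identity by a direct integration by parts argument, handling the key term $\int_U a_{ij} D_i(Xf) D_j f\, dx$ by splitting it via the commutator $[D_i,X]$, and then applying the divergence theorem again to the resulting quadratic form in $\nabla f$. The regularity assumptions ($X$ Lipschitz, $f\in C^{1,1}$, $U$ piecewise $C^1$) are exactly what is required for each of the integrations by parts to be justified almost everywhere and for all the boundary terms to make classical sense via the trace on $\partial U$.

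First I would apply the divergence theorem to the left-hand side. Since $\di(A\nabla f) = D_j(a_{ij} D_i f)$ (using the symmetry and summation convention), the product rule combined with Gauss's theorem yields
\begin{equation*}
2\int_U Xf\,\di(A\nabla f)\,dx \;=\; 2\int_{\partial U} Xf\,\langle A\nabla f,\nu\rangle\,d\sigma \;-\; 2\int_U a_{ij}\,D_i(Xf)\,D_j f\,dx.
\end{equation*}
This produces the first boundary term on the right-hand side of the target identity, and reduces the problem to analyzing the last volume integral.

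Next I would use the elementary identity $D_i(Xf) = X(D_i f) + [D_i,X]f$, which under our regularity reads $[D_i,X]f = (D_iX_k)D_k f$ and makes sense a.e. Exploiting the symmetry $a_{ij}=a_{ji}$ and relabeling indices, one sees
\begin{equation*}
2 a_{ij} X(D_i f)\,D_j f \;=\; a_{ij}\bigl[X(D_i f)\,D_j f + D_i f\,X(D_j f)\bigr] \;=\; X\langle A\nabla f,\nabla f\rangle \;-\; Xa_{ij}\,D_i f\,D_j f.
\end{equation*}
Hence
\begin{equation*}
2 a_{ij}\,D_i(Xf)\,D_j f \;=\; X\langle A\nabla f,\nabla f\rangle \;-\; Xa_{ij}\,D_i f\,D_j f \;+\; 2 a_{ij}\,[D_i,X]f\,D_j f.
\end{equation*}
The last two terms here are precisely (up to sign) two of the volume terms appearing in the statement of Proposition~\ref{P:PW}.

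To finish, I would integrate the identity above over $U$ and apply the divergence theorem once more to the first term $\int_U X\langle A\nabla f,\nabla f\rangle\,dx = \int_U X_i D_i \langle A\nabla f,\nabla f\rangle\,dx$, obtaining
\begin{equation*}
\int_U X\langle A\nabla f,\nabla f\rangle\,dx \;=\; \int_{\partial U}\langle A\nabla f,\nabla f\rangle\,\langle X,\nu\rangle\,d\sigma \;-\; \int_U \di X\,\langle A\nabla f,\nabla f\rangle\,dx.
\end{equation*}
Substituting this back into the previous display and then into the outcome of the first integration by parts produces exactly the five terms on the right-hand side of Proposition~\ref{P:PW}, with the signs matching as claimed.

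The main obstacle is regularity justification rather than algebraic content: with $f\in C^{1,1}(\overline U)$ and $X\in C^{0,1}(\overline U,\mathbb R^n)$, the expression $a_{ij}D_i(Xf)D_j f$ is only defined almost everywhere, and the divergence theorem must be invoked in a form valid for Sobolev/Lipschitz vector fields on piecewise $C^1$ domains. This can be handled either by working directly with the a.e.\ existing Rademacher derivatives (since all integrands are bounded and $U$ has finite measure) or, more cleanly, by mollifying $f$ and $X$, proving the identity for the smooth approximants, and passing to the limit using dominated convergence together with the uniform $C^{1,1}$ and $C^{0,1}$ bounds. Aside from this regularity check, the argument is a purely algebraic rearrangement based on the product rule and the commutator identity.
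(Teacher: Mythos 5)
Your proof is correct: the two applications of the divergence theorem, the commutator splitting $D_i(Xf)=X(D_if)+[D_i,X]f$, and the symmetrization identity $2a_{ij}X(D_if)D_jf = X\langle A\nabla f,\nabla f\rangle - Xa_{ij}D_ifD_jf$ combine to give exactly the five terms claimed, and your regularity discussion (a.e. Rademacher derivatives or mollification) is adequate for $f\in C^{1,1}$, $X$ Lipschitz on a piecewise $C^1$ domain. The paper itself gives no proof of this proposition, citing instead Payne--Weinberger and Lemma 2.11 of Garofalo--Vassilev; your argument is the standard derivation found in those references, so there is nothing to reconcile.
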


Note that, when $A(x) = I$ and $X(x) = x$, Proposition \ref{P:PW} reduces to the well-known identity of Rellich in \cite{Relid}
\begin{align*}
\int_{\p U} |\nabla f|^2 \sa x,\nu \da d\sigma  = (n-2) \int_{U} |\nabla f|^2 dx 
  + 2 \int_{\p U} \sa\nabla f,x\da \frac{\p f }{\p \nu} d\sigma - 2 \int_U \sa\nabla f,x\da \Delta f dx.
\end{align*}

                                             
\vskip 0.3in

\section{Declarations}

\noindent \textbf{Data availability statement:} This manuscript has no associated data.

\vskip 0.2in

\noindent \textbf{Funding and/or Conflicts of interests/Competing interests statement:} The authors declare that they do not have any conflict of interest for this work

\

\end{document}